\DeclareMathSymbol{\leqslant}{\mathalpha}{AMSa}{"36} % nicer `smaller or equal'
\DeclareMathSymbol{\geqslant}{\mathalpha}{AMSa}{"3E} % nicer `larger or equal'
\DeclareMathSymbol{\eset}{\mathalpha}{AMSb}{"3F}     % nicer `emptyset'
\renewcommand{\leq}{\;\leqslant\;}                   % redef. of < or =
\renewcommand{\geq}{\;\geqslant\;}                   % redef. of > or =
\newcommand{\supp}{\mathrm{\,supp\,}}
\newcommand{\range}{\mathrm{range\,}}
\newcommand{\cA}{\mathcal{A}}
\newcommand{\cB}{\mathcal{B}}
\newcommand{\cC}{\mathcal{C}}
\newcommand{\cK}{\mathcal{K}}
\newcommand{\cL}{\mathcal{L}}
\newcommand{\cM}{\mathcal{M}}
\newcommand{\cO}{\mathcal{O}}
\newcommand{\cU}{\mathcal{U}}
\newcommand{\Om}{\Omega}
\newcommand{\eps}{\varepsilon}
\newcommand{\lam}{\lambda}
\newcommand{\vphi}{\varphi}
\newcommand{\lfl}{\lfloor}
\newcommand{\rfl}{\rfloor}
\newcommand{\dto}{\downarrow}
\newcommand{\IP}{\mathbb{P}}
\newcommand{\II}{\mathbb{I}}
\newcommand{\IN}{\mathbb{N}}
\newcommand{\IZ}{\mathbb{Z}}
\newcommand{\IR}{\mathbb{R}}
\newcommand{\IE}{\mathbb{E}}
\newcommand{\iN}{\in\IN}
\newcommand{\iZ}{\in\IZ}
\newcommand{\iR}{\in\IR}
\renewcommand{\le}{\leq}
\renewcommand{\ge}{\geq}
\newcommand{\be}{\begin{eqnarray*}}
\newcommand{\ee}{\end{eqnarray*}}
\newcommand{\ben}{\begin{eqnarray}}
\newcommand{\een}{\end{eqnarray}}
\newcommand{\citeasnoun}{\cite}
\theoremstyle{plain}
\newtheorem{theo}{Theorem}[section]
\newtheorem{lemma}[theo]{Lemma}
\newtheorem{propo}[theo]{Proposition}
\theoremstyle{definition}
\newtheorem{defi}[theo]{Definition}
\newtheorem{remark}[theo]{Remark}
\newtheorem{example}[theo]{Example}
\renewenvironment{proof}[1][] {\noindent{\bf Proof#1. }}{\hspace*{\fill}$\square$\medskip\par}
\begin{document}

\vglue20pt
\centerline{\huge\bf The high resolution }
\medskip

\centerline{\huge\bf vector quantization problem }
\medskip

\centerline{\huge\bf with Orlicz norm distortion}

\bigskip
\bigskip

\centerline{by}
\bigskip
\medskip

\centerline{{\Large S. Dereich and  C. Vormoor}}

\bigskip

\begin{center}
%\centerline{
\it  Fachbereich Mathematik und Informatik\\
Philipps--Universit\"at Marburg\\
Hans--Meerwein Stra\ss e\\
D-35032 Marburg\\
%WWW: \verb?http://mathematik.uni-marburg.de/~dereich?\\
E-mail: dereich@mathematik.uni-marburg.de, cvormoor@skandia.de\\
%running title: Quantization under Orlicz norm distortion
\end{center}

\bigskip
\bigskip
\bigskip

{\leftskip=1truecm
\rightskip=1truecm
\baselineskip=15pt
\small

\noindent{\slshape\bfseries Summary.}
We derive a high-resolution formula for the quantization problem under Orlicz norm distortion.
In this setting, the optimal point density  solves a
variational problem which comprises a function $g:\IR_+\to[0,\infty)$
characterizing the quantization complexity of the underlying Orlicz
space. Moreover, asymptotically optimal codebooks induce a tight  sequence of
empirical measures. The set of possible accumulation points is characterized and in most cases it consists of a single element. In that case, we find convergence as in the classical setting.

\bigskip

\noindent{\slshape\bfseries Keywords.} {Complexity; discrete approximation, high-resolution quantization;
self similarity.}

\bigskip

\noindent{\slshape\bfseries 2000 Mathematics Subject
Classification.} 60E99, 68P30, 94A29.

}

\section{Introduction}

For  $d\iN$, consider an $\IR^d$-valued random vector $X$ (\emph{the original}) defined on some probability space $(\Om,\cA,\IP)$, and denote by $\mu=\cL(X)$ the law of $X$.

We consider the quantization problem, that is for a given natural number $N\iN$
and a loss function $\rho:\IR^d\times \IR^d\to[0,\infty)$ we ask for a codebook
$\cC\subset \IR^d$ consisting of at most $N$ elements and minimizing the
average loss
$$
\IE \rho(X,\cC),
$$
where $\rho(x,A)=\inf_{y\in A} \rho(x,y)$, for all $x\iR^d$ and $A\subset\IR^d$.
The quantization problem arises naturally when discretizing analog signals, and it first gained practical importance in the context of pulse-code-modulation. Research on it started in the 1940's and one finds numerous articles dedicated to the study of this problem in the engineering literature. For an overview on these developments, one may consult \citeasnoun{GrNeu98} (see also \citeasnoun{CoTho91} and \citeasnoun{GerGra92}).
The quantization problem is also related to  numerical integration  \cite{PaPri04b}, and, more recently, the mathematical community became attracted by the field.
In the last years a number of new publications appeared treating finite dimensional as well as infinite dimensional signals (see
for instance \citeasnoun{GraLu00}, \citeasnoun{Gru04} for vector quantization
and \citeasnoun{LuPa04}, \citeasnoun{DFMS03} for functional quantization).

% {\sf
% More recently, the mathematical community became involved into this
% field of research, which resulted in a number of new publications on the quantization problem (see for instance \citeasnoun{GraLu00}, \citeasnoun{Gru04}, \citeasnoun{Der03}).
% }

In this article, we consider asymptotic properties of the quantization
problem when the size $N$ of the codebook tends to infinity, the
\emph{high-resolution quantization problem}. First asymptotic
formulae for vector quantization were found by \cite{Zad66, Zad82}
and \citeasnoun{BuWi82}.

%So far the problem is well under stood for norm based distortions in various settings.  First results for vector quantization were obtained by Zador in 1963 (\citeasnoun{Zad63}, \citeasnoun{Zad82}) and Bucklew and Wise in 1982 \citeasnoun{BuWi82}.
%
%(see \citeasnoun{GraLu00} and  \citeasnoun{DGLP05}).
%An important step towards understanding the asymptotic quantization problem was  made by Zador in his dissertation in 1963 (\citeasnoun{Zad63}, \citeasnoun{Zad82}). There  first high-resolution formulas are obtained  for vector quantization. His results were later generalized by Bucklew and Wise in 1982  \citeasnoun{BuWi82}.
%Recently the mathematical community was attracted by the quantization problem which resulted in a number of new  articles (see for instance . For a recent mathematical account of quantization one may consult \citeasnoun{GraLu00}.
%DGLP05
%
%\citationmode{abbr}
In the classical setting (\emph{norm based distortion}), one considers
$$
\rho(x,y)=\|x-y\|^p
$$
for some norm $\|\cdot\|$ on $\IR^d$ and a moment $p\ge1$. As long as the distribution
$\mu:=\cL(X)$ has thin tails (in an appropriate sense) one can describe
asymptotically optimal codebooks via an optimal point density function: the
empirical measures associated to optimal codebooks $\cC(N)$ of size $N$
$$
\frac1N \sum_{\hat x\in\cC} \delta_{\hat x}
$$
converge to a continuous probability measure that has density proportional to
$\bigl(\frac{d\mu_c}{d\lam^d}\bigr)^{d/(d+p)}$. The density will be called \emph{optimal point density}. Here  and elsewhere, $\lam^d$ denotes $d$-dimensional Lebesgue measure and $\mu_c$ denotes the absolutely continuous part of $\mu$ w.r.t.\ $\lam^d$. When considering point densities we will always assume that $\mu_c$ does not vanish.  Optimal codebooks for the
uniform distribution  can then be used to define asymptotically optimal codebooks for
general $X$: roughly speaking, one partitions the space $\IR^d$ into
appropriate cubes and chooses in each cube an optimal codebook for the uniform
distribution of appropriate size (according to the optimal point density). In
particular, the non-continuous part of $\mu$ has no effect on the asymptotic
problem. The concept of a point density  will play a crucial role in
the following discussion. Its importance in the classical setting  was first
conjectured by Lloyd and Gersho (see \citeasnoun{Ger79}). First rigorous proofs are due to
\citeasnoun{Buck84}. For a recent account on the theory of high resolution
quantization and point density functions one may consult the monograph by
\citeasnoun{GraLu00}.

Nowadays, the asymptotic quantization problem is well understood for loss
functions $\rho$ that are shift invariant and look locally like a power of a
norm based distance, that is
$$
\rho(x,y)=\rho(x-y) \ \text{ and } \ \rho(x)=\|x\|^p+ o(\|x\|^p) \text{ as }x\to0
$$
for some norm $\|\cdot\|$ on $\IR^d$ and a power $p\ge1$ (see \citeasnoun{DGLP05}).  Here and thereafter $o$ and $\cO$ denote the
Landau symbols.  For all these distortion measures  one regains the same
optimal point density  as for the corresponding norm based distortion
measures. Since the optimal point density only depends  on the  behavior of $\rho$ close  to~$0$,   quantization schemes based on this density may show bad performance for moderate $N$. This will occur, when for the original $X$ and the approximation $\hat X$ the distances $\rho(X,\hat X)$ and $\|X-\hat X\|^p$ differ significantly.

% {\sf
% Here and thereafter $o$ and $\cO$ denote the Landau symbols. As long as the
% measure $\mu$ satisfies a concentration property, the problem can be related to
% a quantization problem for the uniform distribution on the unit cube.  Roughly
% speaking, an optimal quantizer looks locally like an optimal quantizer for the
% uniform distribution with a particular point density depending on the
% absolutely continuous component $\mu_c$ of $\mu$. As long as $\mu_c$ is not the
% zero-measure, the singular component of $\mu$ has no influence on the
% asymptotic quantization problem. Hereafter we shall always assume that
% $\mu_c\not=0$.}

As a generalization of the above setting, we suggest to use Orlicz norms as a
measure for the loss inferred when approximating the original $X$ by the
closest point in a codebook~$\cC$.
One reason for this is that optimal codebooks for general distortions are also optimal codebooks for certain Orlicz norm distortions. For moderate $N$ the  optimal point density of the corresponding Orlicz norm distortion seems to be the favorable choice as basis for the construction of good codebooks since it incorporates   $\rho$ as a whole and not only its asymptotics  in zero.

 %In particular, such norms allow  to punish
%exceptionally large errors much stronger than in the models studied so
%far.
Let us introduce the main notation.
% For instance, one might choose an Orlicz norm which punishes relatively large approximation errors much harder than the before mentioned models do. Moreover, our approach includes the loss function $\rho(x,0)=\|x\|^p$ as a special case.
Let $\vphi:[0,\infty)\to[0,\infty)$ be an increasing, left
continuous function with $\lim_{t\dto0} \vphi(t)=0$. Note that this implies
that $\vphi$ is lower semicontinuous. We assume that $\vphi\not=0$,  let
$E=(\IR^d,\|\cdot\|)$ denote an arbitrary Banach space, and denote by $d(\cdot,\cdot)$ the associated distance.  For any $\IR^d$-valued
r.v.\ $Z$, the Orlicz norm $\|\cdot\|_{\vphi}$ is defined as
$$
\|Z\|_{\vphi}=\inf\Bigl\{t\ge0: \IE \,\vphi\Bigl(\frac{\|Z\|}{t}\Bigr) \le1\Bigr\},
$$
with the convention that the infimum of the empty set is equal to infinity. Actually, the left continuity of
$\vphi$ together with monotone convergence imply that the infimum is attained,
whenever the set is nonempty. We set
$$
 L^\vphi(\IP)=\{Z : Z \ \IR^d\text{-valued r.v.\ with }\|Z\|_\vphi<\infty\}.
$$
Note that $\|\cdot\|_\vphi$ defines a norm on $L^\vphi(\IP)$ when $\vphi$ is
convex, whereas otherwise, the triangle inequality does not hold. For our
analysis we do not require that $\vphi$ be convex. Nevertheless, with  slight
misuse of notation, we will allow ourselves to call $\|\cdot\|_\vphi$ an Orlicz
norm. Choosing $\vphi(t)=t^p$, $p\ge1$, yields the usual $L^p(\IP)$-norm, which
will be denoted by $\|\cdot\|_{p}$.

For $N\ge1$, we consider the \emph{quantization error} given by
%\begin{align}\label{def_delta}
$$
\delta(N|X,\vphi)=\inf_{\hat X} \|X-\hat X\|_\vphi,
$$
%\end{align}
where the infimum is taken over all r.v.'s $\hat X$
(\emph{reconstructions}) satisfying the range constraint $|\range(\hat
X)|\le N$. In the case where $\vphi(t)=t^p$ for a $p>0$, we write briefly  $\delta(N|X,p)$ for the corresponding quantization error. %Equivalently, one might demand that $\hat X$ be closest to
%$X$ amongst all codebooks $\cC\subset E$ consisting of at most $N$ elements.

Let us compare the Orlicz norm distortion with the classical setting.
Suppose that $\hat X$ is an optimal $N$-point quantizer under the distortion $\rho(x,y)=f(\|x-y\|)$, where  $f$ is a  left continuous and strictly increasing function with $f(0)=\lim_{t\dto 0} f(t)=0$.
Then one can easily verify that $\hat X$ is also an optimal $N$-point quantizer in the Orlicz norm setting when choosing $\vphi(t)= f(t)/\Delta$ and $\Delta=\IE[\rho(X,\hat X)]$. Thus optimal quantizers in the classical setting correspond to optimal quantizers in the Orlicz-norm setting. As we will see later, in most cases each choice of $\Delta$ (or $\vphi$) leads to a unique optimal point density function. Thus the optimal point density function in the classical setting is replaced by a whole family of densities: good descriptions are now obtained by choosing the parameter $\Delta$ accordingly. We believe that  the optimal Orlicz
point density is a favourable description of good codebooks for moderate $N$.
Let us illustrate this in the case where $f(t)=\vphi(t)=\exp(t)-1$ (meaning that $\Delta=1$).
Whereas the Orlicz norm  point density attains rather large values even at points where the density $\frac{d\mu_c}{d\lam^d}$ is very small (see Example \ref{ex11}), the classical setting neglects the growth of $f$ and one retrieves the optimal density of the norm based distortion to the power $1$.

%In contrast to the Orlicz norm setting, the classical setting induces the point density of the norm based distortion to the power $1$. In the tails, it decays much faster to zero than the Orlicz point density and it completely neglects the growth behavior of $f$.

%Suppose that $\rho$ is a norm based distortion of the form
%$\rho(x,y)=f(\|x-y\|)$, where $f$ is a  left continuous and increasing function with $f(0)=\lim_{t\dto 0} f(t)=0$. We believe that for moderate $N$ the optimal Orlicz norm point density induced by
%$\vphi(t)=\alpha f(t)$ with an appropriate constant $\alpha>0$ gives a better description of good codebooks than the classical approach.

%The constraint is equivalent to demanding that $\hat X$ is a closest element to $X$ in some arbitrary deterministic codebook $\cC$ consisting of at most $N$ elements.

In our notation, the asymptotics under the classical $L^p$-norm distortion reads as follows (see \citeasnoun{GraLu00}):
Let $p\ge1$, $U$ denote a uniformly distributed r.v.\ on $[0,1)^d$, and set
$$
q(E,p)= \inf_{N\ge1} N^{1/d} \,\delta(N|U,p).
$$
%with the abbreviation
%$$
%\delta(N|U,p)=\delta(N|U,t\mapsto t^p).
%$$
If for some $\tilde p>p$, $\IE \|X\|^{\tilde p}<\infty$ (\emph{concentration assumption}), then
\begin{align}\label{eq0322-1}
\lim_{N\to\infty} N^{1/d} \,\delta(N|X,p)= q(E,p)\, \Bigl\|\frac{d\mu_c}{d\lam^d}\Bigr\|_{L^{d/(d+p)}(\IR^d)} ^{1/p},
\end{align}
where $\mu_c$ denotes the absolutely continuous part of $\mu$.

%The case where $\vphi$ induces the $L^p(\IP)$-norm is well understood. Asymptotic estimates were derived by Zador \citeasnoun{Zad82}, Bucklew and Wise \citeasnoun{BuWi82}, and Graf and Luschgy \citeasnoun{GraLu00}. Moreover, in  Graf, Luschgy and Pag\'es  \citeasnoun{DGLP05} similar results are found for distortions that are  closely related to the $L^p(\IP)$-norm problem.
%We state Theorem 6.2 of \citeasnoun{GraLu00}.
%

The analysis of the Orlicz norm setting is based on the concept of a
point allocation density.
%This concept has also proven to be useful in the analysis of the $p$-th moment distortion.
%In the Orlicz norm setting, our analysis is based on the concept of a point allocation density.
We shall see that an optimal point allocation density is given as a minimizer of a variational problem.
Unfortunately, in this context the minimization problem cannot be solved in closed form. We will prove the existence and a dual characterization of the solution.
The quantity $q(E,p)$ corresponds in the general setting to a convex decreasing function $g:(0,\infty)\to [0,\infty)$ which may be defined via
\begin{align}\label{eq0817-1}
g(\eta)=\lim_{N\to\infty} \inf_{\cC(N)} \IE \,\vphi\bigl((N/\eta)^{1/d} \,d(U,\cC(N)) \bigr),
\end{align}
where the infima are taken over all finite sets $\cC(N)\subset \IR^d$ with $|\cC(N)|\le N$ and $U$ denotes a uniformly distributed r.v.\ on the unit cube $[0,1)^d$.  Moreover, we set $g(0)=\liminf_{\eta\dto0} g(\eta)\in[0,\infty]$.
The function $g$ depends on the Banach space and the Orlicz norm. It
will be analyzed in Section \ref{sec2}. $g$ can be represented as a
particular integral in the situations where $E=l_\infty^d$ or $E=l_2^2$. If $\vphi$ induces the $L^p(\IP)$-norm (i.e.\ $\vphi(t)=t^p$), then due to (\ref{eq0322-1})
$$
g(\eta)=q(E,p)^p \, \eta^{-p/d}.
$$

If the measure $\mu$ is not compactly supported, our analysis relies on a concentration property. As in the $L^p(\IP)$-setting, this can be done by assuming the finiteness of an integral $\IE \Psi(\|X\|)$ for some function $\Psi$ satisfying a growth condition (Condition (G), see Definition \ref{defi0322-1}).
% Here we find that it suffices to assume that $\IE \Psi(\|X\|)<\infty$ for some function $\Psi$ satisfying a growth condition (Condition (G), see Definition \ref{defi0322-1}).
Let us state the main theorem.

\begin{theo}\label{theo0323-2}
Assume that $\Psi$ satisfies the growth condition (G) and that $\IE \Psi(\|X\|)<\infty$. Then
$$
\lim_{N\to\infty} N^{1/d}\,\delta(N|X,\vphi)= I^{1/d},
$$
where $I$ is the finite minimal value in the point allocation problem. It is given by
\begin{align}\label{eq0519-1}
I=\inf_\xi \int \xi(x)\,dx,
\end{align}
where the infimum is taken over all non-negative Lebesgue integrable functions $\xi$ with
$$
\int_{\IR^d} g(\xi(x))\,d\mu_c(x)\le1.
$$
Alternatively, one can represent $I$ by the dual formula
$$
I=\sup_{\kappa>0} \frac1\kappa\Bigl(\int_{\IR^d} \bar g\bigl(\frac{\kappa}{h(x)}\bigr)\,d\mu_c(x)-1 \Bigr),
$$
where $\bar g(t)=\inf_{\eta>0}[g(\eta)+ \eta t]$ and $h(x)=\frac{d\mu_c}{d\lam^d} (x)$.
Moreover, one has $I>0$ if and only if $$\mu_c(\IR^d) \sup_{t\ge0} \vphi(t)> 1,$$
where we use the convention that $0\cdot \infty =0$.
\end{theo}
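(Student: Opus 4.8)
I would establish all three assertions simultaneously via the chain
$$
I^{*}\ \le\ \bigl(\liminf_N N^{1/d}\delta(N|X,\vphi)\bigr)^{d}\ \le\ \bigl(\limsup_N N^{1/d}\delta(N|X,\vphi)\bigr)^{d}\ \le\ I\ =\ I^{*},
$$
where $I^{*}:=\sup_{\kappa>0}\tfrac1\kappa\bigl(\int\bar g(\kappa/h)\,d\mu_c-1\bigr)$, $h=\tfrac{d\mu_c}{d\lam^d}$: the rightmost and leftmost inequalities are the upper and lower halves of the high-resolution asymptotics, $I\ge I^{*}$ is weak duality and $I\le I^{*}$ strong duality; once the chain closes, all quantities agree, which is exactly (\ref{eq0519-1}), the dual formula and the existence of the limit. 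As a preliminary step I would reduce to a compactly supported $\mu$: by the growth condition (G) (Definition~\ref{defi0322-1}) together with $\IE\,\Psi(\|X\|)<\infty$, truncating $X$ outside a large ball $B_R$ changes $\delta(N|X,\vphi)$ by $o(N^{-1/d})$ (a vanishing fraction of the $N$ codepoints, deployed as a coarse net outside $B_R$, absorbs the tail) and $I$ by $o_R(1)$; moreover, as in the classical $L^{p}$ theory, the non-absolutely-continuous part of $\mu$ is quantizable with $o(N)$ points so finely that it contributes only $o(1)$ to the relevant expected values $\IE\,\vphi(N^{1/d}\,d(X,\cC))$ and nothing to the asymptotics (and $I$ itself involves only $\mu_c$). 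Henceforth $\mu=\mu_c$, compactly supported with density $h$.

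\noindent\textbf{Upper bound.} Fix $\eps>0$ and, by approximation, a bounded continuous $\xi\ge0$ with compact support satisfying $\int g(\xi)\,d\mu_c\le1$ and $\int\xi\,dx\le I+\eps$. Tile $\IR^d$ by cubes $Q_i$ of side $\ell$ with centres $x_i$, put $\lam_N=(N/(I+2\eps))^{1/d}$, and place in $Q_i$ an asymptotically optimal codebook $\cC_i$ of $n_i\approx\xi(x_i)(\lam_N\ell)^{d}$ points, so that $\sum_i n_i\approx\lam_N^{d}\sum_i\ell^{d}\xi(x_i)\approx\lam_N^{d}\int\xi\,dx\le N$ for small $\ell$ and large $N$, giving $\cC=\bigcup_i\cC_i$ with $|\cC|\le N$. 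Conditioning on $\{X\in Q_i\}$ and rescaling $Q_i$ to the unit cube, the definition (\ref{eq0817-1}) of $g$ (and $h\approx h(x_i)$ on $Q_i$ at Lebesgue points) give $\IE[\vphi(\lam_N d(X,\cC))\mid X\in Q_i]\to g(\xi(x_i))$; summing over $i$ with weights $\mu(Q_i)\approx h(x_i)\ell^{d}$ and then letting $\ell\dto0$ yields $\limsup_N\IE\,\vphi(\lam_N d(X,\cC))\le\int g(\xi)\,d\mu_c\le1$, hence $\|X-\hat X\|_\vphi\le\lam_N^{-1}(1+o(1))$ and $\limsup_N N^{1/d}\delta(N|X,\vphi)\le(I+2\eps)^{1/d}$; let $\eps\dto0$.

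\noindent\textbf{Weak duality and the lower bound.} Attaching a multiplier $1/\kappa$ ($\kappa>0$) to the constraint $\int g(\xi)\,d\mu_c\le1$ and minimizing pointwise (measurable selection),
$$
I\ \ge\ \sup_{\kappa>0}\inf_{\xi\ge0}\Bigl[\int\xi\,dx+\tfrac1\kappa\Bigl(\int g(\xi)\,d\mu_c-1\Bigr)\Bigr]\ =\ \sup_{\kappa>0}\Bigl[-\tfrac1\kappa+\int\inf_{\eta>0}\Bigl(\eta+\tfrac{g(\eta)h(x)}{\kappa}\Bigr)\,dx\Bigr]\ =\ \sup_{\kappa>0}\tfrac1\kappa\Bigl(\int\bar g\bigl(\tfrac{\kappa}{h}\bigr)\,d\mu_c-1\Bigr)\ =\ I^{*},
$$
the last two equalities using $\inf_{\eta>0}(\eta+\tfrac{g(\eta)h}{\kappa})=\tfrac{h}{\kappa}\bar g(\tfrac{\kappa}{h})$. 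For the matching lower bound on the errors, take near-optimal $\cC(N)$ and set $r_N=\|X-\hat X_N\|_\vphi$, so $\IE\,\vphi(d(X,\cC(N))/r_N)\le1$ (the infimum defining $\|\cdot\|_\vphi$ is attained). Tiling by cubes $Q_i$ of side $\ell\dto0$, enlarging slightly to $\tilde Q_i$, setting $n_i=|\cC(N)\cap\tilde Q_i|$ (so that for $X\in Q_i$ one may replace $\cC(N)$ by $\cC(N)\cap\tilde Q_i$ up to a term controlled by $\vphi$ at a fixed positive argument, which is $o(1)$ since $r_N\to0$), rescaling, and invoking that the limit in (\ref{eq0817-1}) is in fact an infimum (Section~\ref{sec2}), one obtains $\IE[\vphi(d(X,\cC(N))/r_N)\mid X\in Q_i]\ge g(\xi_N(x_i))-o(1)$ with $\xi_N:=\sum_i(r_N/\ell)^{d}n_i\,\mathbf 1_{Q_i}$ and $\int\xi_N\,dx=r_N^{d}\sum_i n_i\le(1+o(1))\,r_N^{d}N$. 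Since $\bar g(\kappa/h(x))\le g(\xi_N(x))+\kappa\,\xi_N(x)/h(x)$ by the definition of $\bar g$, integrating against $\mu_c$ gives $\int\bar g(\kappa/h)\,d\mu_c\le(1+o(1))+\kappa\,r_N^{d}N(1+o(1))$; letting $N\to\infty$ yields $\tfrac1\kappa\bigl(\int\bar g(\kappa/h)\,d\mu_c-1\bigr)\le\bigl(\liminf_N N^{1/d}\delta(N|X,\vphi)\bigr)^{d}$ for every $\kappa>0$, hence $\bigl(\liminf_N N^{1/d}\delta\bigr)^{d}\ge I^{*}$.

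\noindent\textbf{Primal minimizer, strong duality, positivity, and the main difficulty.} It remains to prove $I\le I^{*}$, and here I would exhibit an optimizer of (\ref{eq0519-1}) with a matching multiplier. A minimizing sequence $\xi_n$ is bounded in $L^{1}$, so (after cutting $\xi_n$ to $\supp\mu_c$) the measures $\xi_n\,dx$ converge weakly-$*$ along a subsequence to $\xi\,dx+\nu_s$; Jensen on a shrinking cube grid, the Lebesgue differentiation theorem (under which $\nu_s$ differentiates to $0$ a.e.), and Fatou give $\int g(\xi)\,d\mu_c\le\liminf_n\int g(\xi_n)\,d\mu_c\le1$ while $\int\xi\,dx\le\liminf_n\int\xi_n\,dx=I<\infty$ ((G) being exactly what secures $I<\infty$), so $\xi$ is optimal. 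If $I>0$ the constraint is active at $\xi$ (otherwise one could lower $\xi$ where it is positive), and separating the epigraph of the constraint functional from the optimal value (Hahn--Banach) yields $\kappa^{*}>0$ with $\xi(x)\in\arg\min_{\eta\ge0}[\eta+\tfrac{\kappa^{*}}{h(x)}g(\eta)]$ a.e., whence $I^{*}\ge\tfrac1{\kappa^{*}}\bigl(\int\bar g(\kappa^{*}/h)\,d\mu_c-1\bigr)=\int\xi\,dx=I$; for $I=0$ one checks $I^{*}=0$ directly. This closes the chain, giving the limit and both formulas for $I$. For the last assertion, put $S:=\sup_{t\ge0}\vphi(t)$, $m:=\mu_c(\IR^d)$, and recall from Section~\ref{sec2} that $g(0)=S$. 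If $Sm\le1$ (in particular if $m=0$, by the convention $0\cdot\infty=0$), then $\xi\equiv0$ is admissible since $\int g(0)\,d\mu_c=g(0)\,m=Sm\le1$, so $I=0$. If $Sm>1$, choose $M_0>0$ with $g(M_0)m>1$ (possible since $g$ is decreasing with $g(M_0)\uparrow S$ as $M_0\dto0$ and $g(M_0)<\infty$ for $M_0>0$), then $\beta\in(0,m)$ with $g(M_0)(m-\beta)>1$, and, using $\mu_c\ll\lam^{d}$, a $\beta'>0$ with $\lam^{d}(A)<\beta'\Rightarrow\mu_c(A)<\beta$; for an admissible $\xi$ with $\int\xi\,dx<M_0\beta'$, Markov's inequality gives $\lam^{d}(\{\xi>M_0\})<\beta'$, hence $\mu_c(\{\xi\le M_0\})>m-\beta$, so $g$ decreasing forces $\int g(\xi)\,d\mu_c\ge g(M_0)(m-\beta)>1$, a contradiction; thus $I\ge M_0\beta'>0$. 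The genuine difficulty sits in the lower bound and this last paragraph: faithfully transferring a near-optimal discrete codebook into an almost-admissible point allocation (the boundary, non-constancy of $h$, and $o(1)$ bookkeeping) and, above all, securing the primal minimizer and strong duality without leakage of codepoint mass into a singular measure; the upper bound must likewise be kept robust against the singular part of $\mu$ and, through (G), against non-compact support. Weak duality and the positivity criterion are comparatively routine once $g$ is understood.
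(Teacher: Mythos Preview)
Your overall architecture---upper bound by cube-based construction, lower bound by localization, and closure via primal--dual equality for the point allocation problem---matches the paper's. Two of your components, however, diverge from the paper's route, and one of them carries a concrete gap.

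\textbf{The lower bound.} You go straight for the dual value $I^{*}$ by tiling, setting $n_i=|\cC(N)\cap\tilde Q_i|$ with \emph{enlarged} cubes $\tilde Q_i$, and asserting $\sum_i n_i\le(1+o(1))N$. This last inequality is not justified: the $\tilde Q_i$ overlap, and a codepoint sitting near a grid vertex is counted in up to $2^d$ of them, so in the worst case $\sum_i n_i$ exceeds $N$ by a fixed factor no matter how small the enlargement. That factor propagates to your conclusion and you would only obtain $(\liminf N^{1/d}\delta)^d\ge I^{*}/2^d$. The paper (Proposition~\ref{theo0318-1}) avoids this by \emph{shrinking} rather than enlarging: it works on $C_i^\eps\subset C_i$, adjoins a fixed finite set $\cK$ so that the best approximant of any $x\in C_i^\eps$ lies in $C_i$, and then $N_i=|\cC\cap C_i|$ with the $C_i$ disjoint, giving exactly $\sum_i N_i\le N$. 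Your sketch also glosses over the passage from $X\mid Q_i$ (with density proportional to $h$) to the uniform law needed for the infimum characterisation of $g$; the paper handles this by adjoining a coarse lattice and controlling the discrepancy through $\|h-h_m\|_{L^1}$. You do flag ``boundary, non-constancy of $h$, and $o(1)$ bookkeeping'' as the hard part, and indeed the paper's Proposition~\ref{theo0318-1} is precisely that bookkeeping done carefully; once you swap enlarging for shrinking and insert the lattice comparison, your direct route to $I^{*}$ is a legitimate alternative to the paper's vague-convergence argument (Propositions~\ref{theo0318-1} and~\ref{prop4.2}). The paper's detour through empirical measures has the side benefit of simultaneously yielding Theorem~\ref{th0519-1}.

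\textbf{Strong duality and existence.} You propose weak-$*$ compactness of $\xi_n\,dx$ plus Hahn--Banach separation; the paper instead constructs an optimizer explicitly as a convex combination $\alpha\,\bar g'_+(\kappa_0/h)+(1-\alpha)\,\bar g'_-(\kappa_0/h)$ for a specific $\kappa_0$, which also delivers the pointwise characterisation~(\ref{eq0526-3}) of \emph{all} optimal densities. Your abstract route is viable, but the lower semicontinuity step (that $\int g(\xi)\,d\mu_c\le\liminf_n\int g(\xi_n)\,d\mu_c$ when $\xi_n\,dx\to\xi\,dx+\nu_s$) requires genuine work: Jensen on cubes and Lebesgue differentiation do combine to give it, but only after the same $\|h-h_m\|_{L^1}$ approximation, and the singular part $\nu_s$ must be shown to help rather than hurt (it does, since $g$ is decreasing). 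The paper's constructive proof sidesteps all of this.

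Your positivity argument via Markov and absolute continuity of $\mu_c$ is correct and pleasantly direct; the paper obtains the same dichotomy from the dual formula and the limits $g(\eta)\to\sup\vphi$ as $\eta\downarrow0$, $g(\eta)\to0$ as $\eta\to\infty$.
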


\begin{remark}\begin{itemize}\item
If we choose $\vphi(t)=t^p$ in the former theorem, we obtain the classical result,  since  $\Psi(t)=t^q$ with $q>p$ satisfies the growth condition~(G), see Example~\ref{exa0817-1} for an even weaker assumption.
\item The point allocation problem is studied in Section~\ref{sec5}. In particular, representations for optimizers can be found in Theorem~\ref{theo0315-1} and Remark~\ref{re0406-1}.
\end{itemize}
\end{remark}

We keep $I$ as the minimal value in the point allocation problem given by (\ref{eq0519-1}).
%$$
%I=\inf_\xi \int \xi(x)\,dx<\infty,
%$$
%where the minimum is taken over all non-negative Lebesgue integrable functions $\xi$ with
%$$
%\int_{\IR^d} g(\xi(x))\,d\mu_c(x)\le1.
%$$
%%We shall  see  that this definition coincides with  equation (\ref{eq0519-1}).
If $I$ is strictly bigger than $0$, we denote by $\cM$ the set of probability measures on the Borel sets of $\IR^d$ associated to the minimizers of the point allocation problem, i.e.,
\begin{align}\label{eq0525-1}
\cM=\Bigl\{ \nu: \frac{d\nu}{d\lam^d} = \bar\xi, \ \int_{\IR^d} g(I\,\bar\xi(x))\,d\mu_c(x)=1, \ \int_{\IR^d} \bar\xi(x)\,dx=1\Bigr\}.
\end{align}

\begin{theo}\label{th0519-1}
Assume that $I\in(0,\infty)$ and denote by $\cC(N)$, $N\iN$, asymptotically optimal  codebooks of size $N$, that is
$$
\limsup_{N\to\infty} N^{1/d}\,\| d(X,\cC(N))\|_{\vphi}\le I^{1/d}.
$$
Then the empirical measures $\nu_N$ given by
$$
\nu_N=\frac1N\sum_{\hat x\in\cC(N)} \delta_{\hat x}
$$
form a tight sequence of probability measures, and any accumulation point of $(\nu_N)_{N\iN}$ lies in $\cM$. If $g$ is strictly convex, then the set $\cM$ contains exactly one measure $\nu$, and
$$
\lim_{N\to\infty} \nu_N=\nu\qquad\text{weakly.}
$$
\end{theo}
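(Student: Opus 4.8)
The plan is to lift the classical argument --- that the empirical measures of asymptotically optimal codebooks converge to the optimal point density --- to the Orlicz setting, the new feature being that the convex duality behind the point allocation problem~(\ref{eq0519-1}) now replaces H\"older's inequality. By Theorem~\ref{theo0323-2}, $N^{1/d}\delta(N|X,\vphi)\to I^{1/d}$, so asymptotic optimality of $(\cC(N))$ means $N^{1/d}t_N\to I^{1/d}$ with $t_N:=\|d(X,\cC(N))\|_\vphi$, and left continuity of $\vphi$ gives $\IE\,\vphi(d(X,\cC(N))/t_N)\le1$; write $h:=\frac{d\mu_c}{d\lam^d}$. We first establish \emph{tightness}. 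If $(\nu_N)$ were not tight, there would be $\eps>0$ such that, for each $R>0$, $\nu_N(\{\|x\|>2R\})\ge\eps$ for arbitrarily large $N$. Along such $N$ the truncated codebook $\cC'(N):=\cC(N)\cap\{\|x\|\le2R\}$ has at most $(1-\eps)N$ points, and since every point of $\cC(N)\setminus\cC'(N)$ lies at distance $\ge R$ from $\{\|x\|\le R\}$, one has $d(x,\cC(N))\ge\min\{d(x,\cC'(N)),R\}$ for $\|x\|\le R$. Hence, with $\vphi_R(s):=\vphi(\min\{s,R\})$ and $N$ large enough that $t_N\le1$,
$$
1\ge\IE\,\vphi\Bigl(\frac{d(X,\cC(N))}{t_N}\Bigr)\ge\int_{\{\|x\|\le R\}}\vphi_R\Bigl(\frac{d(x,\cC'(N))}{t_N}\Bigr)\,d\mu(x),
$$
so $t_N$ dominates the Orlicz quantization error for the compactly supported measure $\mu|_{\{\|x\|\le R\}}$ with weight $\vphi_R$ and codebook size $\le(1-\eps)N$. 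For $R$ so large that $\mu_c|_{\{\|x\|\le R\}}\neq0$, Theorem~\ref{theo0323-2} applied to the corresponding normalized measure (its concentration hypothesis is vacuous for a compactly supported measure) gives $t_N\ge((1-\eps)N)^{-1/d}I_R^{1/d}(1+o(1))$, where $I_R$ is the associated point allocation value; and since $\vphi_R\uparrow\vphi$ and $\mu|_{\{\|x\|\le R\}}\uparrow\mu$ as $R\to\infty$, monotone convergence in the dual formula of Theorem~\ref{theo0323-2} gives $\sup_R I_R=I$. Letting $N\to\infty$ and then $R\to\infty$ yields $\liminf_N N^{1/d}t_N\ge(1-\eps)^{-1/d}I^{1/d}>I^{1/d}$, a contradiction. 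Hence $(\nu_N)$ is tight.

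\emph{Accumulation points lie in $\cM$.} Let $\nu_{N_k}\to\nu$ weakly. Fix a dyadic grid of half-open cubes of side $2^{-m}$ (offset so that $\nu$ charges no face) and set $D_m\sigma:=\sum_Q\frac{\sigma(Q)}{\lam^d(Q)}\,\mathbf{1}_Q$ for a finite measure $\sigma$. The cube-by-cube lower bound underlying Theorem~\ref{theo0323-2} --- which, via the scaling defining $g$ in~(\ref{eq0817-1}) and the convexity of $g$, bounds the contribution of a cube $Q$ to $\IE\,\vphi(d(X,\cC(N))/t_N)$ from below by essentially $\int_Q h(x)\,g\bigl(Nt_N^d\,D_m\nu_N(x)\bigr)\,dx$ --- together with $\IE\,\vphi(d(X,\cC(N_k))/t_{N_k})\le1$, $N_kt_{N_k}^d\to I$, $\nu_{N_k}(Q)\to\nu(Q)$, and Fatou's lemma (first $k\to\infty$, then $m\to\infty$, using $D_m\nu\to\bar\xi$ $\lam^d$-a.e.\ by martingale convergence, where $\bar\xi$ is the density of the absolutely continuous part of $\nu$, and using continuity of $g$), yields
$$
\int_{\IR^d}g\bigl(I\,\bar\xi(x)\bigr)\,d\mu_c(x)\le1 .
$$
Since also $\int\bar\xi\,d\lam^d\le\nu(\IR^d)=1$, the function $I\bar\xi$ is admissible in~(\ref{eq0519-1}) with objective $\int I\bar\xi\,d\lam^d\le I$, hence a minimizer; in particular $\int\bar\xi\,d\lam^d=1$, so $\nu$ is absolutely continuous with density $\bar\xi$, and --- since the constraint is active at any minimizer (a strictly satisfied constraint would permit a rescaling of $\xi$ below the optimal value $I$; cf.\ Theorem~\ref{theo0315-1}) --- $\int g(I\bar\xi)\,d\mu_c=1$. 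Thus $\nu\in\cM$.

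\emph{The strictly convex case.} If $g$ is strictly convex, the minimizer of~(\ref{eq0519-1}) is unique: for two minimizers $\xi_0,\xi_1$, the average $\frac12(\xi_0+\xi_1)$ is admissible (convexity of $g$) with objective $I$, hence a minimizer; were $\xi_0\neq\xi_1$ on a set of positive $\mu_c$-measure, strict convexity would make this average satisfy the constraint strictly, contradicting minimality as above, so $\xi_0=\xi_1$ $\mu_c$-a.e.; and since any minimizer vanishes $\lam^d$-a.e.\ on $\{h=0\}$ (mass placed there only inflates $\int\xi$), in fact $\xi_0=\xi_1$ $\lam^d$-a.e. Consequently $\cM=\{\nu\}$; and as $(\nu_N)$ is tight with every weak subsequential limit equal to $\nu$, a standard subsequence argument gives $\nu_N\to\nu$ weakly.

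\emph{Main obstacle.} The crux is the cube-by-cube lower bound used in the second step: cubes receiving anomalously few codebook points, where $g$ can be very large (possibly $g(0)=\infty$), together with the non-uniformity of $\mu$ inside a cube, must both be controlled --- exactly what the delicate lower bound construction behind Theorem~\ref{theo0323-2} is designed for. Re-deploying that construction uniformly over a refining sequence of grids, and extracting in the limit both the density $\bar\xi$ and the feasibility inequality $\int g(I\bar\xi)\,d\mu_c\le1$, is where the real work lies; the remaining ingredients (tightness and the duality/convexity arguments) are comparatively soft.
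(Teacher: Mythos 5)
Your treatment of the accumulation points and of uniqueness follows the paper's route: the ``cube-by-cube lower bound'' you invoke is exactly Proposition~\ref{theo0318-1}, the rescaling that yields $\int g(I\bar\xi)\,d\mu_c\le1$ is the argument of Proposition~\ref{prop4.2}, and the perturbation/convexity arguments for the active constraint and for uniqueness match Proposition~\ref{prop4.2} and Remark~\ref{re0406-1}.

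The genuine departure is your separate, direct proof of tightness by truncation, and that step has a gap. After replacing $\vphi$ by the cap $\vphi_R$ you invoke Theorem~\ref{theo0323-2} for the restricted measure and claim $\sup_R I_R=I$ ``by monotone convergence in the dual formula''. But the dual formula for $I_R$ is built from $\bar g^{(\vphi_R)}$, the conjugate of the $g$-function obtained from $\vphi_R$ via the double limit in~(\ref{eq0817-1}); so what you actually need is $g^{(\vphi_R)}\uparrow g$ pointwise, and this requires exchanging the $R\to\infty$ limit with the $N\to\infty$ limit and the infimum over codebooks inside that definition. That is not a routine monotone convergence, and you do not establish it (the easy direction gives only $g^{(\vphi_R)}\le g$, hence $I_R\le I$, which by itself produces no contradiction). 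The paper sidesteps this entirely: it passes to a vaguely convergent subsequence --- always available without any tightness hypothesis --- applies Proposition~\ref{theo0318-1}, and shows via the optimality of $I$ that the density $\bar\xi$ of the vague limit satisfies $\int\bar\xi\,d\lam^d=1$, so the vague limit is already a probability measure; tightness is then automatic because no vague subsequential limit can lose mass. If you adopt that order of argument, your tightness lemma and the unproven $I_R\to I$ become unnecessary.
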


%Our results are applicable in the cases where $\mu$ denotes the standard normal distribution and $\vphi:[0,\infty) \to [0,\infty),\  x \mapsto  \exp(x)-1$. It follows a summary of the results for this example:

As an example we present implications of our results for the standard normal distribution under a particular $\vphi$ growing exponentially fast:

\begin{example}\label{ex11}
Let $\mu$ denote the standard normal distribution,
$(E,\|\cdot\|)=(\IR,|\cdot|)$ and $\vphi:[0,\infty) \to [0,\infty),\
x \mapsto  \exp(x)-1$. Then $\Psi(x)=\exp(x^{3/2})$ satisfies the
growth condition (G) (see Example \ref{exa0817-1}), and Theorem
\ref{theo0323-2} is thus applicable.
Moreover, (see Remark \ref{re1006-1} and Lemma \ref{le1006-1})
$$
g(\eta)=2 \int_0^{1/2} \vphi(t/\eta)\,dt= \eta e^{1/(2\eta)} -2\eta-1.
$$
Note that $g$ ist strictly convex, so that there has to exist a unique normalized optimal point density $\bar\xi$.
In order to approximate the optimal value for $\kappa$, we used numerical methods to obtain $\kappa\approx 0.699$.
Due to (\ref{eq0526-3}) and (\ref{eq1006-1}), the optimal point density is given by
$$
\xi(x)= (-g')^{-1}\bigl(\frac{\kappa}{h(x)}\bigr), \qquad x\iR,
$$
and $I= \int_\IR \xi(x)\,dx\approx 2.88$ .
Next, elementary calculus gives
$$
(-g')^{-1}(t)=\frac12 \frac1{\log(t)-\log(2\log t)+o(1)}\quad \text{as }t\to\infty
$$
so that the normalized point density $\bar \xi=\xi/I$ satisfies
$$
\bar\xi(x)\sim\frac1I\frac1{x^2 }\quad\text{as }|x|\to\infty.
$$
Hence,  $\bar\xi$ decays to zero much more slowly than in the classical setting.
\end{example}

%The proof of the theorem is splitted into several parts.
%
%In the Orlicz norm setting, we proceed as follows. For a given integrable point density function $\xi:\IR^d\to[0,\infty)$, we construct codebooks $\cC(N)$, $N\ge1$, satisfying
%$$
%\limsup_{N\to\infty} \frac1N |\cC(N)|\le \|\xi\|_{L^1(\IR^d)}
%$$
%and
%$$
%\limsup_{N\to\infty} \IE \,\vphi(N^{1/d}\,d(X,\cC(N))) \le \int g(\xi(x))\,d\mu_c(x).
%$$
%On the other hand, we shall see that codebooks  $\cC(N)$, $N\ge1$, whose empirical measures
%$$
%\nu^N=\frac 1N\sum_{\hat x\in\cC(N)} \delta_{\hat x}
%$$
%converge vaguely to some locally finite measure $\nu$, satisfy
%$$
%\liminf_{N\to\infty}\IE \,\vphi(N^{1/d}\,d(X,\cC(N))) \ge \int g(\xi(x))\,d\mu_c(x),
%$$
%where $\xi=\frac{d\nu_c}{d\lam^d}$.
%
%These results lead to the following variational problem.
%In order to find small codebooks $\cC(N)$, $N\ge1$, with
%$$
%\|d(X,\cC(N))\|_\vphi\le N^{-1/d},
%$$
%we thus need to minimize
%$$
%\int \xi \,d\lam^d
%$$
%over all integrable point densities $\xi:\IR^d\to[0,\infty)$ with
%$$
%\int g(\xi(x))\,d\mu_c(x)\le 1.
%$$
%This variational problem is treated in \ref{sec5}.
%

The article is outlined as follows.
In Section \ref{sec2}, we begin with an analysis of the function $g$. % and will play the key role in the understanding of the problem.
In Section \ref{sec3} we construct asymptotically good codebooks based
on a given point allocation measure. Up to this stage, we are
restricting ourselves to absolutely continuous measures with compact support.
In Section \ref{sec4}, we turn things around and prove a lower bound based on a given point density measure. This bound implies the lower bound in Theorem \ref{theo0323-2} and proves a part of Theorem \ref{th0519-1}.
The estimates of Sections \ref{sec3} and \ref{sec4} lead to the
variational problem characterising the point density, and this is treated in Section \ref{sec5}.
In the last two sections, we treat the upper bounds in the quantization problem for singular and non-compactly supported measures. In particular, we derive a concentration analog which guarantees that the quantization error is of order $\cO(N^{-1/d})$.
Finally, we combine the estimates and prove the general upper bound in Theorem \ref{theo0323-2}.

It is convenient to use the symbols $\sim$, $\lesssim$ and $\approx$.
%For an asymptotic comparison of two functions $f$ and $g$ we use the following notation:
We write   $f \sim g$ iff
$\lim \frac fg = 1$,
while $f \lesssim g$ stands for $\limsup \frac fg \le 1$.
Finally, $f\approx g$ means $ 0< \liminf \frac fg \le \limsup \frac fg <\infty$ .

\section{First estimates for the uniform distribution}\label{sec2}

In this section, $X$ denotes a uniformly distributed r.v.\ on $[0,1)^d$.
For $\eta>0$ and $N\ge1$, we consider
\begin{align}\label{eq0317-1}
f_N(\eta):=\inf_{\cC(N)} \IE \,\vphi\Bigl( (N/\eta)^{1/d}\, \min_{\hat x\in\cC(N)} \|X-\hat x\| \Bigr),
\end{align}
where the infimum is taken over all codebooks $\cC(N)\subset \IR^d$ of size $\lfl N\rfl$. Here and elsewhere, $\lfl N\rfl$ denotes the largest integer smaller or equal to $N$.
By a straightforward argument, the lower semicontinuity of $\vphi$ implies that the function
$$
\bigl(\IR^d\bigr)^{\lfl N\rfl} \ni(\hat x_1,\dots,\hat x_{\lfl N\rfl})\mapsto \IE\,\vphi\Bigl( (N/\eta)^{1/d}\, \min_{i=1,\dots,\lfl N\rfl} \|X-\hat x_i\| \Bigr)\in [0,\infty)
$$
is also lower semicontinuous. In the minimization problem
(\ref{eq0317-1}), it suffices to allow for codebook entries that are elements of  a sufficiently large compact set. So the lower semicontinuity implies the existence of an optimal codebook.
%Analogously we can define $f_N$ via
%\begin{align*}
%\inf_{\hat X} \IE \,\vphi\Bigl( (N/\eta)^{1/d}\, \|X-\hat X\| \Bigr)
%\end{align*}
%where the infimum is taken over all $\IR^d$-valued r.v.\ $\hat X$ satisfying the range constraint $|\range(\hat X)|\le N$. Analogously we can define $f_N$ by allowing
%Using a standard continuity argument for the codebook entries, it is easy to see that there exists at least one optimal codebook $\cC(N)$ with $N$ elements. Hence, the minimum in (\ref{eq0317-1}) is attained for some appropriate r.v.\ $\hat X$.
We usually denote by $\hat X^{(N)}$ or $\hat X^{(N,\eta)}$ an optimal reconstruction attaining at most $N$ different values, that is $\hat X=\hat X^{(N)}$ is a minimizer of
$$
\IE\,\vphi\Bigl( (N/\eta)^{1/d}\,  \|X-\hat X\| \Bigr),
$$
among all r.v.'s satisfying the range constraint $|\range(\hat X)|\le N$.
%Now we are in a position to introduce $g$:
Now define the function $g$ by
$$
g(\eta)=\inf_{N\ge1} f_N(\eta),\qquad \eta>0.
$$
We start with a derivation of the structural properties of $g$. In particular, we show the validity of (\ref{eq0817-1}).

\begin{theo}\label{le0223-1}
The function $g:\IR_+ \to[0,\infty)$ is decreasing and convex,
and satisfies $
\lim_{\eta\to\infty} g(\eta)=0$ and $\lim_{\eta\dto0} g(\eta)=\sup_{t\ge0} \vphi(t)$. Moreover, for $\eta>0$,
\begin{align}\label{eq0223-3}
\lim_{N\to\infty} \IE \,\vphi\Bigl( (N/\eta)^{1/d}\, \|X-\hat X^{(N)}\| \Bigr)= g(\eta).
\end{align}
\end{theo}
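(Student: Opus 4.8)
\medskip
\noindent\textbf{Proof strategy.}
Write $\Phi_n(s)=\inf_{|\cC|\le n}\IE\,\vphi\bigl(s^{1/d}d(X,\cC)\bigr)$ for $X$ uniform on $[0,1)^d$, so that $f_n(\eta)=\Phi_n(n/\eta)$ and $g(\eta)=\inf_n\Phi_n(n/\eta)$; each infimum is attained by some codebook in a fixed compact set. Monotonicity and finiteness of $g$ are immediate: since $\vphi$ is increasing, $\eta\mapsto f_n(\eta)$ is decreasing, hence so is $g$, while $g(\eta)\le f_1(\eta)\le\vphi\bigl((1/\eta)^{1/d}D\bigr)<\infty$, with $D$ the $\|\cdot\|$-diameter of $[0,1)^d$. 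The workhorse will be a self-similarity estimate, proved by the usual tiling construction: cutting $[0,1)^d$ into $k^d$ congruent subcubes of side $1/k$, placing in $a$ of them a $(1/k)$-scaled copy of a codebook optimal for $\Phi_{m_1}(s)$ and in the other $b=k^d-a$ subcubes one optimal for $\Phi_{m_2}(s)$, the uniform law rescales subcube-wise and the factor $(k^ds)^{1/d}/k=s^{1/d}$ lines the pieces up, giving
\begin{equation}\label{eq:ss}
\Phi_{am_1+bm_2}\bigl(k^d s\bigr)\;\le\;\frac{a}{k^d}\,\Phi_{m_1}(s)+\frac{b}{k^d}\,\Phi_{m_2}(s),\qquad s>0.
\end{equation}

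First I would prove (\ref{eq0223-3}), i.e.\ $f_N(\eta)\to g(\eta)=\inf_M f_M(\eta)$. Since $f_N\ge g$ always, it is enough to show $\limsup_N f_N(\eta)\le f_m(\eta)$ for each fixed $m$. Given $\eps>0$, pick a bounded codebook $\cC_m$ with $|\cC_m|\le m$ and $\IE\,\vphi\bigl((m/\eta)^{1/d}d(X,\cC_m)\bigr)<f_m(\eta)+\eps$; for $N\ge m$ set $k=\lfl(N/m)^{1/d}\rfl$ and let $\cC_N'$ be the union of $(1/k)$-scaled copies of $\cC_m$, one in each subcube of side $1/k$. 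Then $|\cC_N'|\le k^d m\le N$, and, because $m\le N/k^d<m(1+1/k)^d$, the tiling computation behind (\ref{eq:ss}) gives
\[
f_N(\eta)\;\le\;\IE\,\vphi\Bigl(\bigl(N/(\eta k^d)\bigr)^{1/d}d(Y,\cC_m)\Bigr)\;\le\;\IE\,\vphi\Bigl((1+1/k)(m/\eta)^{1/d}d(Y,\cC_m)\Bigr),
\]
$Y$ uniform on $[0,1)^d$. As $N\to\infty$ we have $k\to\infty$ and the last integrand decreases; the decisive point is that $d(Y,\cC_m)$ has an atomless law — each level set $\{z\in[0,1)^d:\min_c\|z-c\|=r\}$ lies in finitely many translates of the unit sphere and is therefore $\lam^d$-null — so $\vphi$, being left continuous and thus with at most countably many jumps, is a.s.\ continuous at $(m/\eta)^{1/d}d(Y,\cC_m)$, and monotone convergence yields $\limsup_N f_N(\eta)\le\IE\,\vphi\bigl((m/\eta)^{1/d}d(Y,\cC_m)\bigr)<f_m(\eta)+\eps$. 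Sending $\eps\dto0$ and then optimizing over $m$ gives $\limsup_N f_N(\eta)\le g(\eta)$, hence (\ref{eq0223-3}).

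Next I would prove convexity. Since $g$ is decreasing, hence locally bounded on $(0,\infty)$, it suffices by Bernstein--Doetsch to establish midpoint convexity. Fix $\eta_1>\eta_2>0$, put $\eta=\tfrac12(\eta_1+\eta_2)$, and for odd $k$ take $a=\tfrac12(k^d-1)$, $b=\tfrac12(k^d+1)$, $\lambda_k=a/k^d\to\tfrac12$. A short computation shows that the ratios $m_2/m_1$ for which $s:=(am_1+bm_2)/(\eta k^d)$ satisfies both $s\le m_1/\eta_1$ and $s\le m_2/\eta_2$ fill an interval $[L_k,U_k]$ which contains $\eta_2/\eta_1$ and whose length, as well as the distance from $\eta_2/\eta_1$ to either endpoint, is of order $1/k^d$ — the surplus $\eta k^d-(\eta_1 a+\eta_2 b)=\tfrac12(\eta_1-\eta_2)>0$ being exactly what prevents degeneracy. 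Hence I can take $m_1$ a suitably large multiple of $k^d$ and $m_2$ the nearest integer to $m_1\eta_2/\eta_1$, so that $m_1,m_2\to\infty$ and $m_2/m_1\in[L_k,U_k]$, i.e.\ $s\le m_i/\eta_i$; then (\ref{eq:ss}) and the monotonicity of $\Phi_{m_i}$ give
\[
g(\eta)\;\le\;\Phi_{am_1+bm_2}(k^d s)\;\le\;\lambda_k\Phi_{m_1}(s)+(1-\lambda_k)\Phi_{m_2}(s)\;\le\;\lambda_k f_{m_1}(\eta_1)+(1-\lambda_k)f_{m_2}(\eta_2),
\]
and letting $k\to\infty$ through odd values, using (\ref{eq0223-3}), yields $g(\eta)\le\tfrac12 g(\eta_1)+\tfrac12 g(\eta_2)$. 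The main obstacle is precisely this bookkeeping — choosing $k,a,b,m_1,m_2$ so that the glued codebook respects the point budget $am_1+bm_2$ while the induced inner scale $s$ still dominates each target scale $m_i/\eta_i$ — together with the earlier use of atomlessness to tame the mere left continuity of $\vphi$.

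Finally the boundary behaviour. For $\eta\to\infty$, $g(\eta)\le f_1(\eta)\le\vphi\bigl((1/\eta)^{1/d}D\bigr)\to\vphi(0+)=0$. For $\eta\dto0$ I would argue two-sidedly: $\vphi\le M:=\sup_{t\ge0}\vphi(t)$ gives $f_n\le M$, hence $g\le M$; conversely, any $\cC$ with $|\cC|\le n$ covers $\{d(\cdot,\cC)<r\}$ by $n$ balls of radius $r$, so choosing $r$ with $n\,\lam^d(\{\|z\|<1\})\,r^d=\eps$ forces $\IP(d(X,\cC)\ge r)\ge1-\eps$ and hence $\IE\,\vphi\bigl((n/\eta)^{1/d}d(X,\cC)\bigr)\ge(1-\eps)\,\vphi\bigl((\eps/(\eta\,\lam^d(\{\|z\|<1\})))^{1/d}\bigr)$, a bound free of $n$ and $\cC$; thus $g(\eta)\ge(1-\eps)\,\vphi\bigl((\eps/(\eta\,\lam^d(\{\|z\|<1\})))^{1/d}\bigr)$, which tends to $(1-\eps)M$ as $\eta\dto0$. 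Letting $\eps\dto0$ gives $\lim_{\eta\dto0}g(\eta)=M=\sup_{t\ge0}\vphi(t)$.
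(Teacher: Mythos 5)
Your proof is correct, but it takes a genuinely different path from the paper's. The paper proves (\ref{eq0223-3}) and convexity in the \emph{opposite} order: it first establishes only the weaker bound $\limsup_N f_N(\eta)\le g_-(\eta)$ (where $g_-$ is the left-continuous regularization of $g$), using the sub-additivity inequality $f_N(\eta)\le f_{N/M}(\eta)$ together with the fact that $f_{N_0}(\cdot)$ is decreasing; it then proves convexity of $g_-$ via the general mixed inequality (\ref{eq0223-1}), and finally deduces $g=g_-$ from continuity of convex functions, which simultaneously delivers (\ref{eq0223-3}). You instead prove (\ref{eq0223-3}) \emph{first} and directly, which forces you to control the right-continuity defect of $\vphi$ at the limit scale $(m/\eta)^{1/d}d(Y,\cC_m)$; you resolve this with an extra ingredient not in the paper, namely atomlessness of the law of $d(Y,\cC_m)$ (spheres in a normed $\IR^d$ are Lebesgue-null) combined with the fact that a monotone $\vphi$ has countably many jumps. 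You then prove only midpoint convexity and invoke Bernstein--Doetsch, which is leaner than the paper's handling of arbitrary convex combinations but costs you the careful interval bookkeeping $m_2/m_1\in[L_k,U_k]$ with the surplus $\tfrac12(\eta_1-\eta_2)$. Both approaches are sound; the paper's use of $g_-$ elegantly sidesteps the left-vs.-right continuity issue of $\vphi$, while yours is arguably more direct since (\ref{eq0223-3}) does not have to wait for convexity. One minor caveat: ``take $m_1$ a suitably large multiple of $k^d$'' should be read as $m_1\gg k^d$ (e.g.\ $m_1=k^{2d}$), since the window $[L_k,U_k]$ has width of order $k^{-d}$ and one needs the grid spacing $1/m_1$ to be smaller, and then any integer $m_2$ with $m_2/m_1$ in the window will do, not necessarily the nearest integer to $m_1\eta_2/\eta_1$. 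The boundary asymptotics are essentially identical to the paper's covering argument, up to a reparametrization of the radius.
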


We will sometimes use the convention $g(0)=\lim_{\eta\dto0} g(\eta)$. Note that $g(0)$ is finite iff $\vphi$ is bounded. Moreover, we set $f_N(\eta)=\infty$ for $N\in[0,1)$.

\begin{remark}\label{re1006-1}
In general, computing the function $g$ explicitly constitutes a hard
problem. However, as for the classical $L^q$-norm distortion, one can
calculate $g$ when $E=\IR^d$ is endowed with supremum-norm, and in the case where $E$ is the two dimensional Euclidean space.
In such cases, the same lattice quantizers can be used to construct
asymptotically optimal codebooks and to compute the function $g$. The case of the supremum-norm is trivial since the unit ball is space filling. %We will rigorously prove the statement for the two dimensional Euclidean space.
\end{remark}

\begin{lemma} \label{le1006-1} Let $U$ be uniformly distributed  on a
  centered regular hexagon $V$ in $\IR^2$ having unit area, and assume that $E$ is the $2$-dimensional Euclidean space. One has
$$
g(\eta)=\IE \,\vphi\bigl(\eta^{-1/2}\,\|U\|\bigr).
$$
\end{lemma}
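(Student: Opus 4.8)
The plan is to identify the function $g(\eta)$ computed for the uniform distribution on $[0,1)^d$ with the value obtained by tiling $\IR^d$ with scaled copies of the hexagon $V$ and placing one codebook point at the center of each tile. The key geometric fact, classical in the theory of lattice quantization in $\IR^2$, is that the hexagonal lattice gives the optimal quantizing lattice for the Euclidean norm, and — crucially here — that a single tile shape (the hexagon) is {\it space filling}: $\IR^2$ can be partitioned (up to a Lebesgue-null set) into translates of $V$. This is what makes the computation exact rather than merely asymptotic, in contrast to a general Banach space where the unit ball does not tile space.

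First I would fix $\eta>0$ and scale: choose the side length of the hexagon so that each translate has area $\eta/N$, i.e.\ take tiles $V_i = c_i + \lambda V$ with $\lambda^2 = \eta/N$ (ignoring boundary effects from the cube $[0,1)^d$, which contribute a vanishing fraction of tiles as $N\to\infty$). Using the centers $c_i$ of the roughly $N/\eta \cdot 1 = N/\eta$... — more precisely the $\approx N$ tiles needed to cover $[0,1)^d$ when each has area $\eta/N$ — as the codebook $\cC(N)$, one computes by the tiling property and translation invariance of Lebesgue measure that
$$
\IE\,\vphi\Bigl((N/\eta)^{1/d}\,d(X,\cC(N))\Bigr) = \frac{1}{\lambda^2}\int_{\lambda V} \vphi\bigl((N/\eta)^{1/2}\|v\|\bigr)\,dv = \IE\,\vphi\bigl(\eta^{-1/2}\|U\|\bigr),
$$
after the substitution $v = \lambda u$ and using $(N/\eta)^{1/2}\lambda = 1$ together with $\lambda^2 = \eta/N$; here $U$ is uniform on the unit-area hexagon $V$. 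This shows $g(\eta) = \inf_N f_N(\eta) \le \IE\,\vphi(\eta^{-1/2}\|U\|)$ once the boundary tiles are handled (they form an $O(N^{-1/2})$ fraction, and since $\vphi$ is bounded on the relevant compact set of arguments their contribution is negligible; alternatively one invokes \eqref{eq0223-3} and a standard subadditivity argument to pass to the limit cleanly).

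For the matching lower bound $g(\eta) \ge \IE\,\vphi(\eta^{-1/2}\|U\|)$, I would appeal to the optimality of the hexagonal lattice among all point configurations of a given density in the plane. The rigorous route: for any codebook $\cC(N)$ of size $N$, the Voronoi cells $\{W_i\}$ partition $[0,1)^d$ and have average area $1/N$; by a convexity/rearrangement argument (the map $r \mapsto \vphi((N/\eta)^{1/2} r)$ applied to the radial profile, combined with the moment-of-inertia inequality that the regular hexagon minimizes the normalized second-moment-type functional among plane tiles of given area — this is the Fejes Tóth / Gersho-type inequality) one obtains that each cell contributes at least the hexagonal value, and averaging gives the bound. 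Passing $N\to\infty$ via \eqref{eq0223-3} yields $g(\eta) \ge \IE\,\vphi(\eta^{-1/2}\|U\|)$.

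The main obstacle is the lower bound: one cannot directly invoke the classical Gersho-type inequality because $\vphi$ is a general increasing lower semicontinuous function, not a power, so the usual second-moment computation must be replaced by a statement valid for the composed function $r\mapsto\vphi(cr)$. The clean way around this is to note that $\vphi$ can be written as a positive combination (integral) of indicator functions $\vphi(t) = \int_0^\infty \mathbf 1_{\{t > s\}}\,d\nu(s)$ for a suitable measure $\nu$, reducing the problem to balls: $\IE\,\vphi((N/\eta)^{1/2}\|X - \hat X\|) = \int_0^\infty \IP\bigl(d(X,\cC(N)) > (\eta/N)^{1/2}s\bigr)\,d\nu(s)$, and then the extremality of the hexagonal lattice for covering-type quantities — i.e.\ for each fixed radius, the hexagonal packing maximizes the measure covered by balls of that radius centered at a density-$N$ point set — gives the result radius by radius, after which one integrates against $d\nu$. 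This reduction to a one-parameter (radius) statement, where plane lattice optimality is available in the form needed, is the heart of the argument; everything else is bookkeeping with \eqref{eq0223-3} and boundary corrections. Equality in both directions then forces $g(\eta) = \IE\,\vphi(\eta^{-1/2}\|U\|)$, as claimed.
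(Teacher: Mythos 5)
Your route is the one the paper intends (the paper omits the proof, pointing to \citeasnoun[Theorem 8.15]{GraLu00} and \citeasnoun{Fej72}): upper bound by tiling the cube with scaled hexagons and putting a codebook point at each centre, lower bound by Fejes T\'oth's moment theorem, and you correctly identify the one nontrivial point, namely that the moment theorem is already valid for arbitrary nondecreasing $f$ (your layer-cake reduction $\vphi(t)=\int_0^\infty\mathbf 1_{\{t>s\}}\,d\nu(s)$ to the indicator case is exactly how one sees this, and the indicator case is a covering statement which is the special case of Fejes T\'oth's theorem that drives everything).

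There is, however, a normalization slip in the upper bound that you should fix: the hexagonal tiles must have area $1/N$, i.e.\ $\lambda^2=1/N$, not $\lambda^2=\eta/N$. This is forced by the size constraint $|\cC(N)|\le N$ (one tile per codebook point, and $[0,1)^2$ has unit area), and it is also what the algebra requires: with $\lambda^2=1/N$ the substitution $v=\lambda u$ gives
\begin{align*}
\frac1{\lambda^2}\int_{\lambda V}\vphi\bigl((N/\eta)^{1/2}\|v\|\bigr)\,dv
=\int_V\vphi\bigl((N/\eta)^{1/2}\lambda\|u\|\bigr)\,du
=\int_V\vphi\bigl(\eta^{-1/2}\|u\|\bigr)\,du,
\end{align*}
since $(N/\eta)^{1/2}\lambda=\eta^{-1/2}$. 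With your stated $\lambda^2=\eta/N$ you have $(N/\eta)^{1/2}\lambda=1$, the number of tiles is $\approx N/\eta$ rather than $\approx N$ (so for $\eta<1$ the size constraint is violated), and the substitution produces $\IE\,\vphi(\|U\|)$, not the claimed $\IE\,\vphi(\eta^{-1/2}\|U\|)$ --- so as written the displayed chain of equalities is internally inconsistent. Once this is corrected, the boundary-tile argument and the passage to the limit via \eqref{eq0223-3} are fine, and the lower-bound argument is sound.
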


The proof is similar as in the classical setting and therefore ommitted, see \citeasnoun[Theorem 8.15]{GraLu00} and \citeasnoun{Fej72}.

In order to prove Theorem \ref{le0223-1}, we use the inequality below. It is essentially a consequence of the self similarity of $X$.

\begin{propo} \label{propo0307-1} Let $M\in\{k^d:k\iN\}$, $\eta_2>\eta_1>0$ and let $\eta=\alpha \eta_1+\beta \eta_2$ be a convex combination of $\eta_1$ and $\eta_2$.
 Then for any $N\ge1$ one has
\begin{align}\label{eq0223-1}
f_N(\eta)
\le a(M)\, f_{N_1/M}(\eta_1)+b(M)\, f_{N_2/M}(\eta_2),
\end{align}
where
\begin{itemize}
\item $N_1=\frac{\eta_1}{\eta} N$ and $N_2=\frac{\eta_2}{\eta} N$,
\item $b(M)=\lfl \beta M\rfl/M$ and $a(M)=1-b(M)$.
\end{itemize}
Additionally, one has for $N\ge1$ and $\eta>0$,
\begin{align}\label{eq0223-2}
f_N(\eta)\le f_{N/M}(\eta).
\end{align}
\end{propo}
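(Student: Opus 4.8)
The plan is to exploit the self-similarity of the uniform distribution on $[0,1)^d$: if we subdivide the unit cube into $M=k^d$ congruent subcubes of side length $1/k$, then conditionally on $X$ lying in a given subcube, the rescaled position of $X$ within that subcube is again uniform on $[0,1)^d$. I would start from an arbitrary pair of codebooks: pick a near-optimal codebook $\cC_1$ of size $\lfl N_1/M\rfl$ for the problem defining $f_{N_1/M}(\eta_1)$, and a near-optimal codebook $\cC_2$ of size $\lfl N_2/M\rfl$ for $f_{N_2/M}(\eta_2)$. The idea is to designate $\lfl\beta M\rfl$ of the $M$ subcubes as ``type 2'' and the remaining $M-\lfl\beta M\rfl$ as ``type 1'', and in each subcube place a scaled-and-translated copy of the appropriate codebook: a copy of $\cC_1$ (rescaled by $1/k$) in each type-1 subcube, and a copy of $\cC_2$ (rescaled by $1/k$) in each type-2 subcube. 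The resulting global codebook $\cC$ has size at most $(M-\lfl\beta M\rfl)\lfl N_1/M\rfl + \lfl\beta M\rfl\lfl N_2/M\rfl$, and I would check this is $\le\lfl N\rfl$ using $N_1=\tfrac{\eta_1}{\eta}N$, $N_2=\tfrac{\eta_2}{\eta}N$, $\alpha\eta_1+\beta\eta_2=\eta$, and $\lfl x\rfl\le x$; indeed the bound becomes $\le (\alpha M)(N_1/M)+(\beta M)(N_2/M)=\alpha N_1+\beta N_2 = N\cdot\frac{\alpha\eta_1+\beta\eta_2}{\eta}=N$.

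Next I would compute the cost. Conditioning on which subcube $X$ falls into (each has probability $1/M$) and using that within a type-$j$ subcube the distance $\min_{\hat x\in\cC}\|X-\hat x\|$ equals $\tfrac1k$ times the distance from a fresh uniform $U$ on $[0,1)^d$ to $\cC_j$ — because scaling by $1/k$ scales the Banach-space norm by $1/k$ — one obtains
\begin{align*}
\IE\,\vphi\Bigl((N/\eta)^{1/d}\min_{\hat x\in\cC}\|X-\hat x\|\Bigr)
= a(M)\,\IE\,\vphi\Bigl(\tfrac1k(N/\eta)^{1/d}\,d(U,\cC_1)\Bigr)
+ b(M)\,\IE\,\vphi\Bigl(\tfrac1k(N/\eta)^{1/d}\,d(U,\cC_2)\Bigr),
\end{align*}
where $a(M)=1-\lfl\beta M\rfl/M$ and $b(M)=\lfl\beta M\rfl/M$. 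Now $M=k^d$, so $\tfrac1k(N/\eta)^{1/d}=(N/(M\eta))^{1/d}$, and since $N/(M\eta)=(N_1/M)/\eta_1=(N_2/M)/\eta_2$, each of the two expectations is exactly of the form appearing in the definition of $f_{N_1/M}(\eta_1)$, resp.\ $f_{N_2/M}(\eta_2)$, evaluated at the chosen near-optimal codebooks. Taking $\cC_1,\cC_2$ to realize the infima up to $\eps$ and letting $\eps\dto0$ yields \eqref{eq0223-1}. (If $N_1/M<1$ or $N_2/M<1$ the corresponding $f$ is $+\infty$ by the convention stated after Theorem \ref{le0223-1}, so the inequality is trivial in that case; I would remark on this.)

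Inequality \eqref{eq0223-2} is the degenerate case: take $\eta_1=\eta_2=\eta$ (so $\beta$ may be chosen $0$, hence $a(M)=1$, $b(M)=0$, $N_1=N$), which is not literally covered by the strict inequality $\eta_2>\eta_1$, so I would instead argue it directly by the same subdivision with a single codebook $\cC_0$ of size $\lfl N/M\rfl$ copied into all $M$ subcubes, giving a codebook of size $\le M\lfl N/M\rfl\le\lfl N\rfl$ and cost $\IE\,\vphi\bigl((N/(M\eta))^{1/d}d(U,\cC_0)\bigr)$, which is the quantity defining $f_{N/M}(\eta)$. The only mildly delicate points are the bookkeeping with the floor functions in the codebook sizes (ensuring the global size does not exceed $\lfl N\rfl$) and making sure the conditioning/self-similarity identity for $\vphi$ of the distance is applied with the correct scaling factor $k=M^{1/d}$; neither is a real obstacle, but the floor arithmetic is where I would be most careful. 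I expect no genuinely hard step here — the whole proposition is a clean self-similarity/sub-additivity computation.
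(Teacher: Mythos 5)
Your proposal is correct and follows essentially the same route as the paper: subdivide $[0,1)^d$ into $M=k^d$ congruent subcubes, allocate $\lfl\beta M\rfl$ of them to a budget of $\lfl N_2/M\rfl$ points and the rest to $\lfl N_1/M\rfl$, exploit self-similarity to rescale each subcube problem back to the unit cube, and average. One small imprecision worth tightening: the step ``the bound becomes $\le(\alpha M)(N_1/M)+(\beta M)(N_2/M)$ by $\lfl x\rfl\le x$'' is not a termwise comparison, since $M-\lfl\beta M\rfl\ge\alpha M$; the inequality $(M-\lfl\beta M\rfl)\tfrac{N_1}{M}+\lfl\beta M\rfl\tfrac{N_2}{M}\le\alpha N_1+\beta N_2$ holds only because $N_1<N_2$ (equivalently $\eta_1<\eta_2$), so that putting the extra weight $\lfl\beta M\rfl/M\le\beta$ on the larger of the two is favorable — state this explicitly. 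With that fixed the argument matches the paper's, including the handling of the trivial case $N_1/M<1$ and the analogous derivation of \eqref{eq0223-2}.
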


\begin{proof}  Fix $N\iN$ and let
 $\eta_1,\eta_2,\alpha,\beta, N_1, N_2, M=k^d$ be as in the proposition.
Let $C_1=[0,1/k)^d$.
We decompose the cube $[0,1)^d$ into an appropriate union
$\bigcup_{i=1}^M C_i$ of disjoint sets $C_1,\dots,C_M$, where each set
$C_2,\dots,C_M$ is a translate of $C_1$. Moreover, let $X_i$ denote a uniformly distributed r.v.\ on $C_i$.
Since $\cU([0,1)^d)=\frac1M \sum_{i=1}^M \cU(C_i)$, one has, in analogy to Lemma 4.14 in \citeasnoun{GraLu00},
\begin{align*}
f_N(\eta)=\IE \,\vphi\Bigl( (N/\eta)^{1/d}\, \|X-\hat X^{(N)}\| \Bigr) \le \frac 1M  \sum_{i=1}^M \IE \,\vphi\Bigl( (N/\eta)^{1/d}\, \|X_i-\hat X_i^{(\tilde N_i)}\| \Bigr)
\end{align*}
for any $[1,\infty)$-valued sequence $(\tilde N_i)_{i=1,\dots,M}$ with $\sum_i \tilde N_i\le N$.
Here, $\hat X_i^{(\tilde N_i)}$ denotes an optimal quantizer for $X_i$ among all quantizers attaining at most $\tilde N_i$ different values. Since the distributions $\cU(C_i)$ can be transformed into $\cU(C_1)$ through a translation, one obtains
$$
f_N(\eta)\le \frac 1M  \sum_{i=1}^M \IE \,\vphi\Bigl( (N/\eta)^{1/d}\, \|X_1-\hat X_1^{(\tilde N_i)}\| \Bigr).
$$
Self similarity ($\cL(X_1)=\cL(\frac1k X)$) then implies that
$$
f_N(\eta)\le \frac 1M  \sum_{i=1}^M \IE \,\vphi\Bigl( (N/\eta)^{1/d}\,\frac1k\, \|X-\hat X^{(\tilde N_i)}\| \Bigr).
$$
Note that the assertion of the proposition is trivial if $N_1/M<1$, so
that we may assume $N_2/M\ge N_1/M\ge1$.
We now choose for $\lfl \beta M\rfl$ indices $\tilde N_i= N_2/M$ and
for $M- \lfl \beta M\rfl$ indices $\tilde N_i= N_1/M$. Then
$\sum_i\tilde N_i\le N$, so that
\begin{align*}\begin{split}
f_N(\eta)&\le a(M) \,\IE \,\vphi\Bigl( (N/\eta)^{1/d}\,\frac1k\, \|X-\hat X^{(N_1/M)}\| \Bigr)\\
& + b(M)\,\IE \,\vphi\Bigl( (N/\eta)^{1/d}\,\frac1k\, \|X-\hat X^{( N_2/M)}\| \Bigr)\\
%&=a(M) \,\IE \,\vphi\Bigl( \bigl(\lfl N_1/M\rfl/(\eta k^d \lfl N_1/M\rfl/N)\bigr)^{1/d}\, \|X-\hat X^{(\lfl N_1/M\rfl)}\| \Bigr)\\
%&+b(M) \,\IE \,\vphi\Bigl( \bigl(\lfl N_2/M\rfl/(\eta k^d \lfl N_2/M\rfl/N)\bigr)^{1/d}\, \|X-\hat X^{(\lfl N_2/M\rfl)}\| \Bigr)\\
&=a(M)\, f_{N_1/M}(\eta_1)+b(M)\, f_{N_2/M}(\eta_2),
\end{split}\end{align*}
where $b(M)=\lfl \beta M\rfl /M$ and $a(M)=1-b(M)$. Analogously,
setting $\tilde N_i= N/M$ for $i=1,\dots,M$, we obtain that
$f_N(\eta)\le f_{N/M}(\eta)$.
\end{proof}

\begin{proof}[ of Theorem \ref{le0223-1}]
Obviously $g$ is decreasing. First we prove that for arbitrary $\eta>0$,
\begin{align}\label{eq0307-2}
g(\eta)\le \limsup_{N\to\infty} f_N(\eta)\le g_-(\eta).
\end{align}
Fix $\eps>0$ and choose $\eta_0\in(0,\eta)$ so that $g(\eta_0)\le g_-(\eta)+\eps/2$. Moreover, fix $N_0\ge1$ with $f_{N_0}(\eta)\le g(\eta_0)+\eps/2$.
For $N\ge N_0$, we decompose $N$ into  $N=N_0 \,k^d+\tilde N$, where $k=k(N)\iN$ and $\tilde N=\tilde N(N) \iN_0$ are chosen so that $N<(k+1)^d N_0$.
Then
\begin{align*}
f_N(\eta)&=\IE\, \vphi\Bigl( (N/\eta)^{1/d}\, \|X-\hat X^{(N)}\| \Bigr)\\
&\le \IE\, \vphi\Bigl( \bigl(N_0\,k^d /(N_0\,k^d \eta /N)\bigr)^{1/d}\, \|X-\hat X^{(N_0\,k^d)}\| \Bigr)=f_{N_0\,k^d}({\eta N_0\,k^d /N}),
\end{align*}
and inequality (\ref{eq0223-2}) implies that for $M=M(N)=k^d$:
$$
f_N(\eta)\le f_{N_0}({\eta N_0 M/N}).
$$
Note that $N_0 k^d \le N<N_0 (k+1)^d$, hence: $\lim_{N\to\infty} \eta N_0 k^d/N =\eta$.
Consequently, there exists $N_1\ge N_0$ such that for all $N\ge N_1$
one has: $\eta N_0 M/N\ge \eta_0$, and
$$
f_N(\eta)\le f_{N_0}({\eta N_0 M/N})\le f_{N_0}({\eta_0})\le g_-(\eta)+\eps
$$
for all $N\ge N_1$.
Since $\eps>0$ was arbitrary  statement (\ref{eq0307-2}) follows.

We now prove that $g_-$ is convex. Let $\eta_2>\eta_1>0$ and let $\eta=\alpha \eta_1+\beta \eta_2$ be a convex combination of $\eta_1$ and $\eta_2$ and suppose that $g_-(\eta_1)$ is finite. Fix $k\iN$ and let $M=k^d$, $a(M)$ and $b(M)$ be as in Proposition \ref{propo0307-1}. Moreover, for given $N\iN$ we let $N_1=N_1(N)$ and $N_2=N_2(N)$ be as in the previous proposition. Then inequality  (\ref{eq0223-1}) implies that
\begin{align*}
f_N(\eta)
\le a(M)\, f_{N_1/M}({\eta_1}) +b(M)\, f_{N_2/M} ({\eta_2}).
\end{align*}
Therefore, formula (\ref{eq0307-2}) and the left continuity of $g_-$ give
$$
g(\eta)\le \limsup_{N\to\infty}f_N(\eta) \le a(M)\, g_-(\eta_1) +b(M)\, g_-(\eta_2).
$$
Recall that $M\in\{k^d:k\iN\}$ was arbitrary. Since $\lim_{M\to\infty} a(M)=\alpha$ and $\lim_{M\to\infty} b(M)=\beta$ we conclude that
$$
g(\eta) \le \alpha\, g_-(\eta_1)+\beta\, g_-(\eta_2).
$$
For the general statement, observe that
$$
g_-(\eta)=\lim_{\delta\dto 0} g(\eta-\delta) \le \limsup_{\delta\dto0} \bigl[ \alpha\, g_-(\eta_1-\delta)+\beta\, g_-(\eta_2-\delta)\bigr]= \alpha\, g_-(\eta_1)+\beta\, g_-(\eta_2).
$$
Consequently, $g_-$ is convex, and a forteriori it is continuous.  Therefore, the functions $g$ and $g_-$  coincide, which proves (\ref{eq0223-3}).

It remains to prove the asymptotic statements for $g$.
First note that
$$
g(\eta)\le f_1(\eta)\le \IE\, \vphi(\eta^{-1/d} \,\|X\|) \le \vphi(\eta^{-1/d} \sup_{x\in[0,1)^d} \|x\|) \ \longrightarrow \ 0
$$
as $\eta\to\infty$.
On the other hand, one has for $\eta>0$, $\eps>0$ and $N\iN$,
\begin{align*}
f_N(\eta) &=\IE \,\vphi\Bigl( (N/\eta)^{1/d}\, \|X-\hat X^{(N)}\| \Bigr)\\
&\ge \IE \Bigl[ 1_{\{\|X-\hat X^{(N)}\|\ge \eps/N^{1/d}\}}\,\vphi\Bigl( (N/\eta)^{1/d}\, \|X-\hat X^{(N)}\| \Bigr)\Bigr]\\
&\ge (1-N \,\lam^d(B(0,\eps/N^{1/d})))\,\vphi(\eps/\eta^{1/d})\\
&= (1-\lam^d (B(0,\eps))) \,\vphi(\eps/\eta^{1/d}),
\end{align*}
so that
$$
g(\eta)\ge (1-\lam^d (B(0,\eps))) \,\vphi(\eps/\eta^{1/d})\
\underset{\eta\dto0}{\longrightarrow} \ (1-\lam^d (B(0,\eps))) \,\sup_{t\ge0} \vphi(t).
$$
Since $g(\eta)\le \sup_{t\ge0} \vphi(t)$ and $\eps>0$ was arbitrary, the assertion follows.
\end{proof}

\section{The upper bound (1st step)}\label{sec3}

In this section, we consider an original $X$ with law $\mu\ll \lam^d$.
 Moreover, we assume that $\mu$ is compactly supported and fix $l>0$ large enough so that $\mu(C)=1$ for $C=[-l,l)^d$.

Based on a given integrable function $\xi:\IR^d\to [0,\infty)$ (\emph{point density}), we define codebooks and control their efficiency.

\begin{propo}\label{le030305a}
There exist codebooks $\cC(N)$, $N\ge 1$,  such that $\lim_{N\to\infty} \frac1N\,|\cC(N)|= \|\xi\|_{L^1(\IR^d)}$ and
$$
\limsup_{N\to\infty} \IE\,\vphi(N^{1/d}\, d(X,\cC(N)))\le \int g(\xi(x))\,d\mu(x).
$$
\end{propo}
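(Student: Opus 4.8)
\noindent\emph{Sketch of the intended argument.}
The plan is to adapt the classical cube construction: decompose $C=[-l,l)^d$ into a fine grid of congruent subcubes, place in each subcube a rescaled near-optimal codebook for the uniform distribution whose cardinality is prescribed by $\xi$, and \emph{augment it by a regular sub-grid of a fixed proportion of the codepoints} (a ``safety net''). We may assume $\int g(\xi)\,d\mu<\infty$, since otherwise there is nothing to prove. As $\mu(C)=1$, putting $\lceil N\int_{\IR^d\setminus C}\xi\,d\lam^d\rceil$ extra codepoints far from $\supp\mu$ affects neither $d(X,\cC(N))$ nor the conclusion while bringing $|\cC(N)|$ up to the required value, so we may assume $\xi=0$ off $C$; and replacing $\xi$ by $\xi\vee\eps$ on $C$ only decreases $g(\xi)$ pointwise (since $g$ is decreasing), so, deferring the limit $\eps\dto0$ — under which $\int g(\xi\vee\eps)\,d\mu\ra\int g(\xi)\,d\mu$ and $\|\xi\vee\eps\|_{L^1(C)}\ra\|\xi\|_{L^1(C)}$ by monotone resp.\ dominated convergence — we may assume $\xi\ge\eps$ on $C$.

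Fix $\delta\in(0,1)$ and $m\iN$; let $Q_1,\dots,Q_{m^d}$ be the subcubes of side $s=2l/m$, set $n_j=\lfl N\int_{Q_j}\xi\,d\lam^d\rfl$ and $\bar\xi_j=\lam^d(Q_j)^{-1}\int_{Q_j}\xi\,d\lam^d\ge\eps$. Then $n_j\ra\infty$ and $\sum_j n_j/N\ra\int_C\xi\,d\lam^d$ as $N\ra\infty$. Reserve $m_j\approx\delta n_j$ of the $n_j$ codepoints of $Q_j$ for a regular sub-grid of $Q_j$, and take the remaining $n_j-m_j$ to be the rescaling onto $Q_j$ of a codebook of that cardinality minimizing $\cC'\mapsto\IE\,\vphi\bigl(N^{1/d}s\,d(U,\cC')\bigr)$ for $U$ uniform on $[0,1)^d$ (such a minimizer exists by the lower semicontinuity recalled after (\ref{eq0317-1})); let $\cC(N)$ be the union over $j$. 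The sub-grid forces, for every $x\in Q_j$, the bound $d(x,\cC(N))\le c_E\,s\,(\delta n_j)^{-1/d}$ with $c_E$ depending only on the norm; since $n_j\gtrsim N\eps s^d$, the factors of $s$, $m$ and $N$ cancel and one gets
\[
N^{1/d}\,d(x,\cC(N))\le A(\delta,\eps)\qquad\text{for all }x\in C\text{ and all large }N,
\]
with $A(\delta,\eps)<\infty$ \emph{independent of $N$ and of $m$}; hence $\vphi\bigl(N^{1/d}d(x,\cC(N))\bigr)\le\vphi(A(\delta,\eps))<\infty$ uniformly on $C$.

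Write $h=d\mu/d\lam^d$, $\bar h_j=\mu(Q_j)/\lam^d(Q_j)$ and $\tilde h_m=\sum_j\bar h_j\,1_{Q_j}$. Splitting $h=\bar h_j+(h-\bar h_j)$ on each $Q_j$ and using the uniform bound,
\[
\IE\,\vphi\bigl(N^{1/d}d(X,\cC(N))\bigr)=\sum_j\int_{Q_j}\vphi(\dots)\,h\,dx\le\sum_j\mu(Q_j)\,f_{n_j-m_j}(\eta_j)+\vphi(A(\delta,\eps))\,\|h-\tilde h_m\|_{L^1},
\]
where the first estimate uses self-similarity of the cube together with the optimality (and the monotonicity of $\vphi$) for the non-grid part, giving $\lam^d(Q_j)^{-1}\int_{Q_j}\vphi(\dots)\,dx\le f_{n_j-m_j}(\eta_j)$ with $\eta_j=(n_j-m_j)/(Ns^d)\ra(1-\delta)\bar\xi_j$. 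By Theorem~\ref{le0223-1} (in the form $f_{n}(\eta_n)\ra g(\eta_\infty)$ whenever $n\ra\infty$ and $\eta_n\ra\eta_\infty\in(0,\infty)$, which its proof delivers) each summand of the first, finite, sum tends to $\mu(Q_j)\,g((1-\delta)\bar\xi_j)$, so $\limsup_N\IE\,\vphi(\dots)\le\int g\bigl((1-\delta)\bar\xi_{\cdot}\bigr)\,d\mu+\vphi(A(\delta,\eps))\,\|h-\tilde h_m\|_{L^1}$, where $\bar\xi_{\cdot}$ is the step function equal to $\bar\xi_j$ on $Q_j$. Now let $m\ra\infty$ along a refining sequence of grids: $\|h-\tilde h_m\|_{L^1}\ra0$, and $\bar\xi_{\cdot}\ra\xi$ $\lam^d$-a.e.\ (Lebesgue differentiation), hence $\mu$-a.e.; as $g$ is continuous and $g((1-\delta)\bar\xi_{\cdot})\le g((1-\delta)\eps)\in L^1(\mu)$, dominated convergence yields $\limsup_m\limsup_N\IE\,\vphi(\dots)\le\int g((1-\delta)\xi)\,d\mu$. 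Letting $\delta\dto0$ (whence $g((1-\delta)\xi)\uparrow g(\xi)$) and then $\eps\dto0$, and choosing $\cC(N)$ diagonally along suitable sequences of these parameters, produces codebooks with $N^{-1}|\cC(N)|\ra\|\xi\|_{L^1(\IR^d)}$ and $\limsup_N\IE\,\vphi(N^{1/d}d(X,\cC(N)))\le\int g(\xi)\,d\mu$.

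The main obstacle — and the only delicate point — is that within a cube $\mu$ is not uniform, while both $\vphi$ and $h$ may be unbounded, so the naive estimate ``per-cube error $\approx g(\bar\xi_j)\,\mu(Q_j)$'' is not directly available. The safety net is precisely the device that repairs this: reserving a \emph{fixed} fraction $\delta$ of the codepoints in each cube for a regular sub-grid makes $N^{1/d}d(\cdot,\cC(N))$, and hence $\vphi$ of it, uniformly bounded (the powers of $N$ cancel because the sub-grid carries $\Theta(n_j)$ points), which both legitimizes replacing $h$ by its cube-averages at an $L^1$ cost and reduces the per-cube estimate to the already-understood uniform case; the only price is the factor $1-\delta$ inside $g$, harmlessly removed at the end — but only \emph{after} $m\ra\infty$, so that the $\delta$-dependent constant $A(\delta,\eps)$, which blows up as $\delta\dto0$, is never multiplied by anything that has not already been sent to $0$.
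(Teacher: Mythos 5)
Your proposal is correct and follows essentially the same strategy as the paper: decompose the supporting cube into congruent subcubes, approximate $h=d\mu/d\lambda^d$ by its cube-averages $\tilde h_m$, place in each subcube a (near-)optimal codebook for the uniform distribution sized according to $\xi$, introduce a ``safety net'' that makes $N^{1/d}d(\cdot,\cC(N))$ uniformly bounded on $C$ (so that the replacement $h\mapsto\tilde h_m$ costs only $\vphi(\text{const})\cdot\|h-\tilde h_m\|_{L^1}$), and then cascade the limits $N\to\infty$, $m\to\infty$, and the regularization parameters, finishing with a diagonalization. The differences from the paper are only bookkeeping: the paper's safety net is a single global lattice of spacing $\kappa N^{-1/d}$, which adds $\sim(2l/\kappa)^d N$ extra points and is made negligible by letting $\kappa\to\infty$ at the end, while keeping each per-cube allocation at the full $N_i$ (so the argument of $f$ is exactly $\bar\xi_i$, independent of $N$, and Theorem~\ref{le0223-1} applies verbatim); you instead carve a fixed $\delta$-fraction of each cube's budget into a local sub-grid, which keeps the total size on budget but costs a factor $(1-\delta)$ inside $g$, removed by $\delta\downarrow 0$, and forces you to use a mild strengthening of~(\ref{eq0223-3}) with $N$-dependent $\eta$ (correctly justified via monotonicity of $f_N(\cdot)$ and continuity of $g$). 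The paper uses $\xi+\eps 1_C$ rather than $\xi\vee\eps$, and leaves the treatment of mass of $\xi$ off $C$ implicit, which you make explicit. One tiny slip: as $\delta\downarrow0$ one has $g((1-\delta)\xi)\downarrow g(\xi)$, not $\uparrow$, since $g$ is decreasing; this does not affect the argument (it is still a monotone limit of functions dominated by the integrable $g((1-\delta_0)\eps)$).
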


\begin{proof}
It suffices to prove the assertion for functions $\xi$ that are uniformly bounded away from $0$ on $C$. If this is not the case, one can consider $\bar \xi=\xi+\eps\,1_{C}$ for some $\eps>0$. Then the statement says that there exist codebooks $\cC^\eps(N)$, $N\ge1$, with $|\cC^\eps(N)|\sim N(\|\xi\|_{L^1(\IR^d)}+\eps\,\lam^d(C))$ satisfying
$$
\limsup_{N\to\infty} \IE\,\vphi(N^{1/d}\, d(X,\cC^\eps(N)))\le \int g(\bar\xi(x))\,d\mu(x)\le \int g(\xi(x))\,d\mu(x),
$$
and a diagonalization argument for $\eps\dto0$ proves the general assertion.

Fix $m\iN$, let $C_1=[0,l/2^m)^d$ and decompose $C$ into a finite disjoint union
$$
C=\bigcup_{i=1}^M C_i,
$$
where $M=2^{(m+1)d}$ and $C_2,\dots,C_M$ are translates of $C_1$.
For $i=1,\dots,M$, we denote by $X_i$ a uniformly distributed r.v.\ on $C_i$, and let
$\mu^m=\sum_{i=1}^{M} \mu(C_i)\,\cU(C_i)$. Moreover, we let
$$
h_m =\frac{d \mu^m}{d\lambda^{d}}=\sum_{i=1}^{M}\frac{
  \mu(C_i)}{\lambda^{d}(C_i)} \cdot 1_{C_i},
$$
and denote by $\nu$ the measure given by $\nu(A)=\int_{A}\xi\,d\lam^d$, $A\in\cB(\IR^d)$.

We introduce the codebooks of interest. For some fixed $\kappa>0$, let
$$
\tilde \cC(N)=(\kappa\,N^{-1/d}\IZ^d)\cap C, \qquad N\ge1,
$$
and let $\cC_i(N)$ denote codebooks of size $N_i=N_i(N)=N \,\nu(C_i)$ minimizing
$\IE\, \vphi(N^{1/d}\, d(X_i,\cC_i(N)))$. We consider the efficiency of the codebooks
$$
\cC(N)=\tilde \cC(N)\cup \bigcup_{i=1}^M \cC_i(N),\qquad N\ge1.
$$
First, note that
$$
|\tilde \cC(N)|\le \bigl(1+2\frac l\kappa N^{1/d}\bigr)^d \sim \Bigl(\frac{2l}{\kappa}\Bigr)^d N,\qquad N\to\infty,
$$
hence:
\begin{align}\label{eq0309-2}
|\cC(N)|\lesssim \Bigl(\|\xi\|_{L^1(\IR^d)}+ \Bigl(\frac {2 l}{\kappa}\Bigr)^d\Bigr) \,N,\qquad N\to\infty .
\end{align}

It remains to estimate the expectation $\IE\,\vphi(N^{1/d} d(X,\cC(N)))$ for large $N\ge1$. Observe that $d(x,\tilde \cC(N))\le \kappa \,N^{-1/d}\,\sup_{x\in[0,1)^d}\|x\|$ for all $x\in C$ so that
\begin{align}\begin{split}\label{eq0309-3}
\Bigl|\IE\,&\vphi\bigl(N^{1/d} \,d(X,\cC(N))\bigr) - \int \vphi\bigl(N^{1/d} \,d(x,\cC(N))\bigr)\,d\mu^m(x)\Bigr|\\
&= \Bigl|\int\vphi\bigl(N^{1/d} \,d(x,\cC(N))\bigr) (h(x)-h_m(x)) \,dx\Bigr|\le  \vphi(c \kappa) \,\|h-h_m\|_{L^1(\IR^d)},
\end{split}\end{align}
where $c=\sup_{x\in[0,1)^d}\|x\|$ is a universal constant.
Moreover,
\begin{align*}
\int \vphi(N^{1/d} \,d(x,\cC(N))\,d\mu^m(x) & = \sum_{i=1}^{M}
\mu(C_i)\, \IE\, \varphi \bigl(N^{1/d} \,d(X_i,\cC(N))\bigr) \\
& \le \sum_{i=1}^{M}
\mu(C_i)\, \IE\, \varphi \bigl(N^{1/d} \,d(X_i,\cC_i(N))\bigr).
\end{align*}
Now let $U$ denote a $\cU([0,1)^d)$-distributed r.v. Due to the
optimality assumption on the choice of $\cC_i(N)$, a translation and
scaling then yields
$$
\IE\, \varphi \bigl(N^{1/d} \,d(X_i,\cC_i(N))\bigr) =\IE \, \varphi \Bigl(N^{1/d} \frac l{2^{m+1}}\,\|U-\hat U^{(N_i)}\| \Bigr),
$$
where $\hat U^{(N_i)}$ denotes a reconstruction minimizing the latter
expectation among all r.v.\ with a range of size $N_i$. Next, rewriting the previous expectation as
$$
\IE \, \varphi \Bigl(N^{1/d} \frac l{2^{m+1}}\,\|U-\hat U^{(N_i)}\| \Bigr)=f_{N_i}({\nu(C_i)/\lam^d(C_i)}),
$$
it follows that
%Consequently, since every $N_i$, $i=1,\dots,M$ converges to $\infty$ as $N\to infty$, one has
$$
\int \vphi\bigl(N^{1/d} \,d(x,\cC(N))\bigr)\,d\mu^m(x) \le \sum_{i=1}^{M}
\mu(C_i)\, f_{N_i}({\nu(C_i)/\lam^d(C_i)}).
$$
As $N\to\infty$, every $N_i$, $i=1,\dots,M$, converges to $\infty$, and one has
$$
\sum_{i=1}^{M}
\mu(C_i)\, f_{N_i}({\nu(C_i)/\lam^d(C_i)})
 \ \longrightarrow \  \sum_{i=1}^{M}
\mu(C_i)\, g\Bigl(\frac{\nu(C_i)}{\lam^d(C_i)}\Bigr)= \int g(\xi_m(x))\,d\mu(x),
$$
where $\xi_m= \sum_{i=1}^M \frac{\nu(C_i)}{\lam^d(C_i)}\cdot 1_{C_i}$.
Putting everything together yields
$$
\limsup_{N\to\infty} \IE\,\vphi\bigl(N^{1/d} \,d(X,\cC(N))\bigr) \le  \int g(\xi_m(x))\,d\mu(x)+\kappa \,\|h-h_m\|_{L^1(\IR^d)}\,\sup_{x\in[0,1)^d}\|x\|.
$$
The function $\xi_m$ converges to $\xi$ as $m\to\infty$ in $\lam^d$-a.a.\ points $x$ (see  \citeasnoun{Cohn80}, Theorem 6.2.3).
Recall that by construction $\xi_m$ is bounded from below on $C$, and hence dominated convergence gives
$$
\lim_{m\to\infty} \int g(\xi_m(x))\,d\mu(x)=\int g(\xi(x))\,d\mu(x).
$$
Analogously, $h_m$ converges to $h$ in $\lam^d$-a.a.\ points $x$ and due to Scheff\'e's theorem (see \citeasnoun{Bil79}, Theorem 16.11) $h_m$ converges to $h$ in $L^1(\IR^d)$ as $m\to\infty$.

For arbitrary $\eps>0$,  we can choose $\kappa>0$ sufficiently large to ensure that the size of $\cC(N)$  (see (\ref{eq0309-2})) satisfies
$$
|\cC(N)|\lesssim (1+\eps) \,\|\xi\|_{L^1(\IR^d)}\,N .
$$
Finally, it remains to pick $m\iN$ sufficiently large so that
$$
\limsup_{N\to\infty} \IE\,\vphi\bigl(N^{1/d} \,d(X,\cC(N))\bigr) \le   \int g(\xi(x))\,d\mu(x) + \eps,
$$
and the general statement then follows from a diagonalization argument.
\end{proof}

\section{The lower bound}\label{sec4}

From now on, let $X$ be an arbitrary random vector on $\IR^d$ with law $\mu$.
In this section, we change our viewpoint: for an index set  $\II\subset[1,\infty)$ with $\sup\II=\infty$, we consider arbitrary finite codebooks $\cC(N)$, $N\in\II$, and ask for asymptotic lower bounds of
$$
\IE \,\vphi(N^{1/d} \,d(X,\cC(N)))
$$
as $N\to\infty$.
Our computations are based on the assumption that the empirical measures
$$
\nu^N= \frac1N\sum_{\hat x\in\cC(N)} \delta_{\hat x},\qquad N\in\II,
$$
associated to $\cC(N)$ converge vaguely to some locally finite measure $\nu$ on $\IR^d$.

\begin{propo}\label{theo0318-1}
Letting $\nu_c$ denote the absolutely continuous part of $\nu$, one has
$$
\liminf_{N\to\infty} \IE \,\vphi\bigl(N^{1/d}\,d(X,\cC(N))\bigr)\ge \int g\bigl(\frac{d\nu_c}{d\lam^d}\bigr)\,d\mu_c(x).
$$
\end{propo}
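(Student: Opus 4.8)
The plan is to follow the classical strategy for the lower bound (see \cite[Ch.~6]{GraLu00}): localise the quantization error to small cubes, bound the contribution of each cube from below by the uniform-distribution constant $g$ evaluated at the local density of codebook points, and add up. I would begin with three reductions. First, since $\vphi\ge0$, one has $\IE\,\vphi(N^{1/d}d(X,\cC(N)))\ge\int\vphi(N^{1/d}d(x,\cC(N)))\,d\mu_c(x)$, so we may integrate against $\mu_c$ only; put $h=\frac{d\mu_c}{d\lam^d}$ and $\psi=\frac{d\nu_c}{d\lam^d}$. Second, fix $c\in(0,\sup_t\vphi(t))$, replace $\vphi$ by the bounded function $\vphi\wedge c$, and replace $g$ by the constant $g_c$ supplied by Theorem \ref{le0223-1} for $\vphi\wedge c$; proving the bound with $g_c$ and then letting $c\uparrow\sup_t\vphi(t)$ — using $g_c\uparrow g$, which the description of $g$ in Theorem \ref{le0223-1} makes transparent, together with monotone convergence — yields the statement for $\vphi$. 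Third, by monotone convergence for the non-negative measure $g_c(\psi)\,d\mu_c$ it suffices to prove, for each fixed cube $K=[-R,R)^d$, that $\liminf_N\int_K(\vphi\wedge c)(N^{1/d}d(x,\cC(N)))\,d\mu_c(x)\ge\int_K g_c(\psi)\,d\mu_c$, and then send $R\to\infty$. From here on the distortion is bounded by $c$, which keeps all approximation errors under control.

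The core is a per-cube estimate. Let $Q$ be a small cube, $Q^+$ the concentric cube inflated by a factor $1+2\theta$, and $r>0$ the $\|\cdot\|$-distance between $Q$ and $\partial Q^+$. If $x\in Q$, any point of $\cC(N)$ within distance $r$ of $x$ belongs to $Q^+$, so $d(x,\cC(N))\ge d(x,\cC(N)\cap Q^+)\wedge r$; since $\vphi$ is increasing and $\vphi(N^{1/d}r)\to\sup_t\vphi(t)>c$, for all large $N$ we have $(\vphi\wedge c)(N^{1/d}r)=c$ and hence $(\vphi\wedge c)(N^{1/d}d(x,\cC(N)))\ge(\vphi\wedge c)(N^{1/d}d(x,\cC(N)\cap Q^+))$ — the boundary contribution has become harmless, an external point already incurring the maximal cost $c$. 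Integrating over $Q$ against $\lam^d$, replacing $h$ by its value $h(x_Q)$ at a Lebesgue point $x_Q\in Q$ (at a cost of at most $c\,\eps\,\lam^d(Q)$), and using the scaling relation — $d(Z,\cC)$ has the law of $\lam^d(Q)^{1/d}d(U,\cC')$ for $Z\sim\cU(Q)$, $U\sim\cU([0,1)^d)$, $|\cC'|=|\cC|$ — together with the bound $f_n\ge g_c$ built into the definition of $g_c$, one gets, for all large $N$,
$$
\int_Q(\vphi\wedge c)\bigl(N^{1/d}d(x,\cC(N))\bigr)\,h(x)\,dx\ \ge\ h(x_Q)\,\lam^d(Q)\,g_c\!\Bigl(\tfrac{|\cC(N)\cap Q^+|}{N\,\lam^d(Q)}\Bigr)-c\,\eps\,\lam^d(Q).
$$

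I would then sum this over a finite, pairwise disjoint family of such cubes, drawn from a Vitali covering of $K$, whose centres $x_Q$ are Lebesgue points of $h$ and of $\psi$ and points at which the Lebesgue derivative of $\nu_s$ vanishes, which cover $\mu_c|_K$ up to $\eps$, which are small enough for the estimates above, and whose inflation factor $\theta$ is chosen so that $\nu(\partial Q^+)=0$ for every cube in the family. Letting $N\to\infty$, vague convergence gives $\tfrac1N|\cC(N)\cap Q^+|=\nu^N(Q^+)\to\nu(Q^+)\le(1+2\theta)^d(\psi(x_Q)+2\eps)\,\lam^d(Q)$ (using $\nu_s\ge0$ and the Lebesgue-point properties), and the continuity of the bounded convex decreasing function $g_c$ yields
$$
\liminf_{N\to\infty}\int_K(\vphi\wedge c)\bigl(N^{1/d}d(x,\cC(N))\bigr)\,d\mu_c(x)\ \ge\ \sum_Q h(x_Q)\,\lam^d(Q)\,g_c\bigl((1+2\theta)^d(\psi(x_Q)+2\eps)\bigr)-c\,\eps\,\lam^d(K).
$$
Because $h(x_Q)\lam^d(Q)\ge\mu_c(Q)-\eps\lam^d(Q)$ and $g_c$ is uniformly continuous (bounded, monotone, continuous), the right-hand sum can be replaced by $\int_{\bigcup_Q Q}g_c((1+2\theta)^d\psi)\,d\mu_c$ up to errors that, by the Lebesgue-point property of $\psi$, vanish as $\eps\to0$; letting then $\eps\to0$, $\theta\to0$ (dominated convergence, $g_c\le c$), $c\uparrow\sup_t\vphi(t)$ and $R\to\infty$ finishes the proof.

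The step I expect to be the main obstacle is the boundary effect: a codebook point lying just outside a small cube $Q$ lowers $d(\cdot,\cC(N))$ inside $Q$, so the quantization error does not localise cleanly. Enlarging $Q$ to $Q^+$ and truncating at the level $\vphi(N^{1/d}r)$ is the standard remedy, and passing to a bounded distortion at the outset makes that truncation automatic; the remainder — the interplay of vague convergence with Lebesgue's density theorem, the bookkeeping of the weight $h\,d\lam^d$, and the verification $g_c\uparrow g$ — is routine but somewhat lengthy.
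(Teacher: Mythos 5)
Your overall architecture (localise to small cubes, compare each cube to the uniform problem via scaling, control the local number of codebook points by vague convergence, and clean up with Lebesgue points and a Vitali covering) is a legitimate alternative to the paper's route, which instead approximates $\mu_c$ by a piecewise-uniform measure $\mu^m$ on a dyadic decomposition and handles boundary effects by shrinking each cube to $C_i^\eps$ and adjoining a finite buffer set $\cK$. Your device of inflating $Q$ to $Q^+$ and truncating the distortion so that a far-away best approximant already incurs the maximal cost $c$ is a correct way to make the error localise; the scaling step, the use of $f_{n}\ge g_c$, and the vague-convergence bound $\nu^N(Q^+)\to\nu(Q^+)$ (after arranging $\nu(\partial Q^+)=0$) are all sound.

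The genuine gap is the claim that $g_c\uparrow g$ as $c\uparrow\sup_t\vphi(t)$, which you call ``transparent'' from Theorem \ref{le0223-1}. It is not: $g_c(\eta)=\lim_N f_{N,c}(\eta)$ and $g(\eta)=\lim_N f_N(\eta)$, and what you need is the interchange $\sup_c\lim_N f_{N,c}=\lim_N\sup_c f_{N,c}$. The inequality $g_c\le g$ is trivial, but the reverse is exactly where the difficulty of an unbounded $\vphi$ is concentrated: for the truncated loss it is conceivable that near-optimal codebooks leave ``holes'' of small measure $\alpha$ (paying only $\alpha c$ there) while packing the saved points elsewhere, and ruling this out for large $c$ is a quantitative statement about $g$, not a soft limit argument. (For bounded $\vphi$ the claim is trivial since $\|\vphi-\vphi\wedge c\|_\infty\to0$; the issue is only the unbounded case, which is the case of interest.) The claim is in fact true, but the natural proof augments a near-optimal codebook for $\vphi\wedge c$ by the lattice $(\kappa N^{-1/d}\IZ^d)\cap[0,1)^d$, so that the untruncated loss exceeds the truncated one by at most $\IP(\vphi(\cdot)>c)\,\vphi(c_E\kappa\eta^{-1/d})\le c^{-1}f_{N,c}(\eta)\,\vphi(c_E\kappa\eta^{-1/d})$ at the price of inflating $\eta$ by the factor $1+\kappa^{-d}$; letting $N\to\infty$, then $c\to\infty$, then $\kappa\to\infty$ gives $g\le\lim_c g_c$. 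This lattice ``firewall'' is precisely the trick the paper applies directly inside its proof of the proposition (the set $(\kappa N^{-1/d}\IZ^d)\cap C$ added to $\cC(N)$, whose density cost $\kappa^{-d}$ is removed at the very end by monotone convergence). So your reduction is not wrong, but it silently relocates the paper's key idea into an unproven lemma; as written, the proof is incomplete at that step.
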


\begin{proof}
It suffices to prove that for an arbitrary $l>0$:
$$
\liminf_{N\to\infty} \IE \,\vphi\bigl(N^{1/d}\,d(X,\cC(N))\bigr)\ge \int_{[- l, l)^d} g\bigl(\frac{d\nu_c}{d\lam^d}\bigr)\,d\mu_c(x).
$$
Indeed, the  assertion then follows immediately by monotone convergence.
 For a given $m\iN$, just as in the proof of the upper bound, we
 decompose the set $C=[- l, l)^d$ into a disjoint union $
C=\bigcup_{i=1}^M C_i,
$
where $M=2^{m+1}$, $C_1=[0,l/2^m)^d$, and $C_2,\dots,C_M$ are translates of $C_1$.
Again we consider the measure $\mu^m=\sum_{i=1}^M \mu_c(C_i)\,\cU(C_i)$ and the density
$$
h_m =\sum_{i=1}^M \frac{\mu_c(C_i)}{\lam^d(C_i)} \cdot 1_{C_i}.
$$
Analogously, we let $\xi_m=\sum_{i=1}^M \frac{\nu(\bar C_i)}{\lam^d(C_i)} \cdot 1_{C_i}$.
For some fixed $\kappa>0$, we extend the codebooks $\cC(N)$ to
$$\cC^{(1)}(N)= \cC(N)\cup \bigl( (\kappa N^{-1/d} \IZ^d)\cap C\bigr).$$
Then, just as in (\ref{eq0309-3}), one has
\begin{align}\label{eq0318-1}\begin{split}
\Bigl|\int_C \vphi(N^{1/d}\,d(x,\cC^{(1)}(N)))  \, d\mu_c(x) -\int_C \vphi(N^{1/d}\,d(x,\cC^{(1)}(N)))\, d\mu^m(x)\Bigr|\le \vphi(c\,\kappa) \,\|h-h_m\|_{L^1(C)},
\end{split}
\end{align}
where $c=\sup_{x\in[0,1)^d}\|x\|$.

Next, we  control the approximation efficiency of $\cC^{(1)}(N)$ for the measure $\mu^m$.
We fix $\eps\in(0,l/2^{m+1})$ and  consider for $i=1,\dots,M$, the closed cubes
$$
C_i^\eps= \{x\iR^d: d_2(x,(C_i)^c)\ge\eps\}\subset C_i.
$$
Here $d_2$ denotes the standard Euclidean metric on $\IR^d$.
Now observe that there exists a finite set $\cK=\cK(\eps)\subset \IR^d$ such that for  $x\in C_i^\eps$, $i=1,\dots,M$,
\begin{align}\label{eq0317-2}
d(x,\cK\cap C_i)\le d(x,(C_i)^c).
\end{align}
We extend the codebooks $\cC^{(1)}(N)$ to  $\cC^{(2)}(N)=\cC^{(1)}(N)\cup\cK$ and let $\cC_i(N)=\cC^{(2)}(N)\cap C_i^{\eps}$ for $N\in\II$ and $i=1,\dots,M$.
Note that property (\ref{eq0317-2}) guarantees that any point $x$ in an arbitrary  cube $C_i^\eps$ has as best $\cC^{(2)}(N)$-approximant an element in $\cC_i(N)$. Moreover, none of the codebooks $\cC_i(N)$ is empty, i.e.\ the number $N_i=N_i(N)$ defined as  $N_i=|\cC_i(N)|$, is greater or equal to $1$.
Consequently, letting $X_i$ denote $\cU(C_i^\eps)$-distributed r.v.'s, one obtains
\begin{align}\begin{split}\label{eq0318-2}
\int \vphi(N^{1/d} d(x,\cC^{(2)}(N)))\,d\mu^m(x) & \ge \int_{\bigcup_{i=1}^M C_i^\eps} \vphi(N^{1/d} d(x,\cC^{(2)}(N)))\,d\mu^m(x)\\
&=\sum_{i=1}^M \mu^m(C_i^\eps) \,\IE \, \vphi(N^{1/d} d(X_i,\cC_i(N)))  .
\end{split}\end{align}
Let  $U$ be a $\cU([0,1)^d$-distributed r.v., and
fix an arbitrary $i\in\{1,\dots,M\}$. Note that the cube $C_i^\eps$
has side length $2^{-m} l-2\eps$, so that a shifting and rescaling yields
\begin{align*}
\IE \, \vphi(N^{1/d} d(X_i,\cC_i(N)))&\ge \IE \, \vphi(N^{1/d}\, (2^{-m}l-2\eps)\,\|U-\hat U^{(N_i)}\|)\\
&=\IE \, \vphi((N \lam^d(C_i^\eps))^{1/d}\,\|U-\hat U^{(N_i)}\|),
\end{align*}
where $\hat U^{(N_i)}$ denotes an optimal approximation satisfying the range constraint $|\range(\hat U^{(N_i)})|\le N_i$.
With $f_N(\eta)$ as in (\ref{eq0317-1}), we arrive at
$$
\IE \, \vphi(N^{1/d} d(X_i,\cC_i(N)))\ge f_{N_i}({N_i/(N\,\lam^d(C_i^\eps))}).
$$

We need to control the quantity $N_i/N$ for $N$ large.
Recall that $\cC^{(2)}(N)$ is the union of the sets $\cC(N)$, $\cK$ and $(\kappa N^{-1/d} \IZ^d)\cap C$, and the vague convergence of $\nu^N$ to $\nu$  implies that
$$
\limsup_{N\to\infty}\frac{|\cC(N)\cap C_i|}{N} \le \nu(\bar C_i).
$$
Moreover, the set $(\kappa N^{-1/d} \IZ^d)\cap C_i$ contains at most
$
\bigl(\frac{l\,2^{-m}}{\kappa N^{-1/d}}+1\bigr)^d
$
elements, so that
$$
\limsup_{N\to\infty}\frac{N_i}{N} \le \nu(\bar C_i)+\frac{\lam^d(C_i)}{\kappa^d }.
$$
Consequently, Theorem \ref{le0223-1} implies that
$$
\IE\, \vphi(N^{1/d} d(X_i,\cC_i(N)))\gtrsim g\Bigl(\Bigl(\nu(\bar C_i)+ \frac{\lam^d(C_i)}{\kappa^d }\Bigr)\Big/\lam^d(C_i^\eps)\Bigr).
$$
Combining this estimate with (\ref{eq0318-1}) and (\ref{eq0318-2}) yields
\begin{align*}
\int_C \vphi(N^{1/d}\,d(x,\cC))  \, d\mu_c(x) & \ge \int_C \vphi(N^{1/d}\,d(x,\cC^{(1)}))\, d\mu^m(x) - \vphi(c\,\kappa) \,\|h-h_m\|_{L^1(C)}\\
&\gtrsim \sum_{i=1}^M \mu^m(C_i^\eps) \, g\Bigl(\Bigl(\nu(\bar C_i)+ \frac{\lam^d(C_i)}{\kappa^d }\Bigr)\Big/\lam^d(C_i^\eps)\Bigr) \\
& \hspace{4.2cm} -\vphi(c\,\kappa) \,\|h-h_m\|_{L^1(C)}
\end{align*}
as $N\to\infty$. Since $\eps>0$ can be chosen arbitrarily small, it follows that
\begin{align*}
\int_C \vphi(N^{1/d} d(x,\cC^{(2)}))  \, d\mu_c(x) & \gtrsim \sum_{i=1}^M \mu_c(C_i) \, g\Bigl(\frac{\nu(C_i)}{\lam^d(C_i)} + \frac{1}{\kappa^d }\Bigr)-\vphi(c\,\kappa) \,\|h-h_m\|_{L^1(C)}\\
&= \int_C g\Bigl(\xi_m(x)+\frac1{\kappa^d}\Bigr)\,d\mu_c(x) - \vphi(c\,\kappa) \,\|h-h_m\|_{L^1(C)}
\end{align*}
As $m\to\infty$, the densities $\xi_m$ converge pointwise to
$\xi=\frac{d\nu_c}{d\lam^d}$ for $\lam^d$-a.a.\ $x$, and $h_m$ converges to $h$ in $L^1(C)$.
Consequently, Fatou's Lemma implies that
$$
\liminf_{N\to\infty} \int_C \vphi(N^{1/d} d(x,\cC^{(2)}))  \, d\mu_c  \ge \int_C g\Bigl(\xi(x)+\frac1{\kappa^d}\Bigr)\,d\mu_c(x).
$$
Finally, observing that $\kappa>0$ was arbitrary and applying monotone convergence yields the general result.
\end{proof}

The above proposition enables us to give a partial proof of Theorem \ref{th0519-1}.
For the remainder of this section, let $I$ be given by (\ref{eq0519-1}), assume that $I\in(0,\infty)$, and denote
by $\cM$  the set of measures associated with the minimizers of the point allocation problem as defined in~(\ref{eq0525-1}). So far we have not proved that $\cM$ is non-empty.

\begin{propo}\label{prop4.2}
Suppose that the codebooks $\cC(N)$, $N\iN$, are of size $N$ and satisfy
\begin{align}\label{eq0323-1}
\limsup_{N\to\infty} N^{1/d}\, \|d(X,\cC(N))\|_\vphi\le I^{1/d},
\end{align}
and consider the associated empirical measures
$$
\nu^N=\frac1N\sum_{\hat x\in\cC(N)} \delta_{\hat x},\qquad N\iN.
$$
Then $(\nu^N)_{N\iN}$ is a tight sequence of probability measures
 and any accumulation
 point of $(\nu^N)$ lies in $\cM$ (in the weak topology). %In particular, one has
 %$$
 %\lim_{N\to\infty} N^{1/d}\, \|d(X,\cC(N))\|_\vphi= I^{1/d}
 %$$
\end{propo}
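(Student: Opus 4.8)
The plan is to feed a suitably reindexed copy of the codebooks $(\cC(N))$ into the lower bound of Proposition~\ref{theo0318-1}, thereby controlling the density of any vague subsequential limit of $(\nu^N)$, and then to read off tightness and membership in $\cM$.

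First I would rewrite the hypothesis in the form used by Proposition~\ref{theo0318-1}. Put $a_N=\|d(X,\cC(N))\|_\vphi$; since the infimum defining the Orlicz norm is attained, $\IE\,\vphi\bigl(d(X,\cC(N))/a_N\bigr)\le1$, and because (\ref{eq0323-1}) forces $a_N\le I^{1/d}(1+\eps)\,N^{-1/d}$ for every $\eps>0$ and all large $N$, monotonicity of $\vphi$ gives
$$
\IE\,\vphi\Bigl(\bigl(N/(I(1+\eps)^d)\bigr)^{1/d}\,d(X,\cC(N))\Bigr)\le1,\qquad N\ge N_0(\eps).
$$
Since the $\nu^N$ are probability measures on $\IR^d$, every subsequence has a further subsequence $(\nu^{N_k})$ converging vaguely to a sub-probability measure $\nu$; write $\xi=d\nu_c/d\lam^d$ and $\beta=I(1+\eps)^d$. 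I would then apply Proposition~\ref{theo0318-1} to the codebooks $\cD(M):=\cC(N_k)$ indexed by $M=M_k:=N_k/\beta$: as $|\cD(M_k)|=\beta M_k$, the empirical measures normalised by $M_k$ equal $\beta\,\nu^{N_k}\to\beta\nu$ vaguely, whereas $M_k^{1/d}d(X,\cD(M_k))=(N_k/\beta)^{1/d}d(X,\cC(N_k))$, so the displayed inequality makes $\IE\,\vphi(\,\cdot\,)\le1$ for large $k$. Proposition~\ref{theo0318-1} then yields $\int g(\beta\,\xi)\,d\mu_c\le\liminf_k\IE\,\vphi\bigl(M_k^{1/d}d(X,\cD(M_k))\bigr)\le1$, that is $\int g(I(1+\eps)^d\xi)\,d\mu_c\le1$; letting $\eps\dto0$ and using that $g$ is decreasing (monotone convergence) gives $\int g(I\xi)\,d\mu_c\le1$.

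Consequently $I\xi$ is admissible in (\ref{eq0519-1}), so $\int I\xi\,dx\ge I$, i.e.\ $\int\xi\,dx\ge1$; but $\int\xi\,dx=\nu_c(\IR^d)\le\nu(\IR^d)\le1$, whence all of these equal $1$. In particular $\nu=\nu_c$ is a probability measure with density $\bar\xi:=\xi$, $\int\bar\xi\,dx=1$. Since every vague subsequential limit of $(\nu^N)$ thus has full mass, $(\nu^N)$ is tight, and every weak accumulation point of $(\nu^N)$ is such a $\nu$.

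It remains to upgrade $\int g(I\bar\xi)\,d\mu_c\le1$ to equality, so that $\nu\in\cM$; here $I\bar\xi$ is in fact a minimiser of (\ref{eq0519-1}) since its value is $\int I\bar\xi\,dx=I$. I expect this activeness of the constraint to be the main obstacle. One way is a perturbation argument: choose $n$ with $\mu_c(A)>0$ for $A=\{1/n\le I\bar\xi\le n\}$ (possible because $I>0$ prevents $I\bar\xi$ from vanishing $\mu_c$-a.e., and $\lam^d(A)<\infty$ by Markov's inequality), observe that $g$, being convex, decreasing and nonnegative, is Lipschitz on $[1/(2n),\infty)$, and note that if $\int g(I\bar\xi)\,d\mu_c<1$ then $I\bar\xi-t\,1_A$ would be admissible with strictly smaller objective for small $t>0$, contradicting minimality; alternatively one appeals to the description of minimisers in Section~\ref{sec5}. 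This places $\nu$ in $\cM$ and completes the proof. Besides the activeness argument, the other delicate point is the index rescaling by $\beta$ that reconciles the Orlicz-norm hypothesis with the $\IE\,\vphi$-form of Proposition~\ref{theo0318-1}.
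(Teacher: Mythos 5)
Your argument is correct and mirrors the paper's own proof step for step: rescale and reindex the codebooks so that Proposition~\ref{theo0318-1} applies, send $\eps\dto0$ to get $\int g(I\xi)\,d\mu_c\le1$, deduce that every vague subsequential limit has full mass (hence tightness and $\int\bar\xi\,dx=1$), and close with a Lipschitz perturbation of $g$ forcing the constraint to be active. The only cosmetic difference is that you arrange $\mu_c(A)>0$ for the perturbation set, while the paper only needs (and directly secures, via $\int I\bar\xi\,dx=I>0$) $\lam^d(A)>0$; since $\mu_c\ll\lam^d$ your condition is a harmless, slightly stronger, choice.
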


\begin{proof}
Fix an arbitrary vaguely convergent subsequence $(\nu^{N})_{N\in\II}$
of $(\nu^N)_{N\in \IN}$ and denote by $\nu$ its limiting measure. Let $\eps>0$.
As long as the Orlicz norm $\|d(X,\cC(N))\|_\vphi$ is finite, one has in general
$$
\IE \vphi\bigl(\frac {d(X,\cC(N))}{\|d(X,\cC(N))\|_\vphi}\bigr)\le 1.
$$
Note that (\ref{eq0323-1}) implies that for all sufficiently large $N\iN$
$$
\|d(X,\cC(N))\|_\vphi\le ((1+\eps) I/N)^{1/d}
$$
so that
$$
\limsup_{N\to\infty} \IE\, \vphi((N/(1+\eps)I)^{1/d}\, d(X,\cC(N)))\le 1.
$$
We consider the codebooks $\tilde \cC(\tilde N)=\cC((1+\eps) \tilde N I)$ for
$$\tilde N \in \tilde\II:=\{N/((1+\eps)I):N\in\II\}.
$$
Then
\begin{align}\label{eq0323-2}
\limsup_{\tilde N\to\infty} \IE\,\vphi(\tilde N^{1/d}\, d(X,\tilde \cC(\tilde N)))\le 1.
\end{align}
On the other hand, the empirical measures
$$
\tilde\nu^{\tilde N}:=\frac1{\tilde N} \sum_{\tilde x\in\tilde\cC(\tilde N)} \delta_{\hat x} = (1+\eps)  I
\nu^{(1+\eps) \tilde N I}
$$
converge vaguely to $(1+\eps) I \nu$ so that by Theorem \ref{theo0318-1},
$$
\liminf_{\tilde N\to\infty} \IE\,\vphi(\tilde N^{1/d}\, d(X,\tilde \cC(\tilde N))) \ge \int g((1+\eps) I\,\xi(x))\,d\mu_c(x),
$$
where $\xi=\frac{d\nu_c}{d\lam^d}$. Combining this  with
(\ref{eq0323-2}), and noticing that $\eps>0$ is arbitrary, one obtains
$$
\int g( I\,\xi(x))\,d\mu_c(x)\le 1.
$$
Consequently, the point allocation $\tilde \xi(x)=I\,\xi(x)$ solves the allocation problem:
\begin{align}\label{eq0526-1}
\int g( \tilde\xi(x))\,d\mu_c(x)\le 1\text{ and }\int \tilde \xi\,d\lam^d\le I.
\end{align}
Due to the definition of $I$, the right inequality is actually an equality.

Assume now that $\int g( \tilde\xi(x))\,d\mu_c(x)< 1$, and fix
$\delta>0$ (small) so that the set $A:=\{x\iR^d:\tilde\xi(x)\ge \delta\}$ has positive Lebesgue measure. Since $g$ restricted to $[\delta/2,\infty)$ is Lipschitz continuous, we can lower the density  $\tilde \xi$ on $A$ in such a way that the point allocation constraint remains valid, thus contradicting the optimality of $I$.
Consequently, the inequalities in (\ref{eq0526-1}) are even
equalities, and we immediately obtain that $\nu_c\in\cM$. Since $\nu_c$ has mass $1$, we also have that $\nu=\nu_c\in\cM$. Moreover, $(\nu^{N})_{N\in\II}$ converges to $\nu$ in the weak topology.
We finish the proof by noticing that the sequence $(\nu^N)$ is tight,
since it has no vaguely convergent subsequence loosing some of its mass.
\end{proof}

\section{The point allocation problem}\label{sec5}

We decompose the original measure $\mu$ into its absolutely continuous part $\mu_c=h\,d\lam^d$ and singular component $\mu_s$. The singular component will have no influence on the asymptotics of the quantization error.

In this section we use standard methods for convex optimization problems to treat the point allocation problem, i.e.\  the
minimization of
\begin{align}\label{eq0304-1}
\int_{\IR^d} \xi(x)\,dx
\end{align}
over all positive integrable functions $\xi:\IR^d\to[0,\infty)$
satisfying
\begin{align}\label{eq0307-3}
\int_{\IR^d} g(\xi(x))\,d\mu_c(x)\le 1.
\end{align}
A minimizer $\xi$ will be called \emph{optimal point density}.

We shall use the convex conjugate of $g$, i.e.
$$
g^*(a)=\sup_{\eta\ge0}[a \eta -g(\eta)],\qquad a\le0,
$$
and the concave function $\bar g:[0,\infty)\to [0,\infty), a\mapsto -g^*(-a)$. Alternatively, one can define $\bar g$ as $
\bar g(a)=\inf_{\eta\ge0} [a\eta+g(\eta)].
$

The function $\bar g$ is continuous and satisfies $\bar g(0)=\inf_{\eta\ge0} g(\eta)=0$. The right continuity in $0$ is a consequence of the lower semicontinuity of $g^*$.
Moreover, since $g$ is lower semicontinuous, one has
\begin{align}\label{eq0316-2}
g(\eta)=\sup_{a\le0} [a\eta-g^*(a)]= \sup_{a\ge0}[ \bar g(a)-a\eta], \qquad \eta\ge0.
\end{align}

\begin{theo}\label{theo0315-1}
\begin{enumerate}\item
The minimal value $I$ satisfies the dual formula
\begin{align}\label{eq0307-1}
I = \sup_{\kappa>0} \frac1\kappa\Bigl(\int \bar g\bigl(\frac{\kappa}{h(x)}\bigr)\,d\mu_c(x)-1 \Bigr).
\end{align}
\item
The optimization problem has an integrable solution iff the integral
\begin{align}\label{eq0316-1}
\int \bar g\bigl(\frac{\kappa}{h(x)}\bigr)\,d\mu_c(x)
\end{align}
is finite for some $\kappa>0$. In such a case there exists an optimal point density $\xi$.

\item Suppose that (\ref{eq0316-1}) is finite and that
$$\mu_c(\IR^d) \sup_{t\ge0} \vphi(t)>1.$$
Then $I>0$ and  there exists an optimal point density. Moreover, all optimal point densities $\xi$ satisfy
\begin{align}\label{eq0526-3}\int g(\xi(x))\,d\mu_c(x)=1 \ \ \text{ and } \ \
\bar g'_+\bigl(\frac\kappa{h(x)} \bigr) \le \xi(x) \le \bar g'_-\bigl(\frac\kappa{h(x)} \bigr) \text{ for a.e.\ }x\iR^d,
\end{align}
where $\kappa$ is a maximizer of the right hand side of
(\ref{eq0307-1}). (Here we  make use of the convention that $\bar g'_+(\infty)=\bar g'_-(\infty)=0$).
In particular, the supremum in the dual formula is attained. 

\item If $\mu_c(\IR^d) \sup_{t\ge0} \vphi(t)\le1$, then $I=0$ and $\xi=0$ is an optimal point density.
\end{enumerate}
\end{theo}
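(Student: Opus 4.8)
The plan is to treat (\ref{eq0304-1})--(\ref{eq0307-3}) as a constrained convex minimization and apply Lagrangian duality, the multiplier being essentially $1/\kappa$. Throughout I write the constraint against Lebesgue measure, $\int g(\xi)\,d\mu_c=\int g(\xi(x))h(x)\,dx$, and use the properties of $g$ from Theorem~\ref{le0223-1} ($g\ge0$, convex, decreasing, $g(0)=\sup_{t\ge0}\vphi(t)$, $g(\infty)=0$) together with the elementary facts that $\bar g$ is concave, increasing, $\bar g(0)=0$, $\bar g(a)\uparrow g(0)$ and $\bar g'_+(a)\downarrow0$ as $a\to\infty$ (the latter since $\bar g(a)\le a\eta+g(\eta)$ for every fixed $\eta>0$ pins the asymptotic slope of $\bar g$ below every positive number), and the conjugacy identity that $\bar g(a)=a\eta+g(\eta)$ holds precisely for $\eta\in[\bar g'_+(a),\bar g'_-(a)]$, these being exactly the minimizers of $\eta\mapsto a\eta+g(\eta)$. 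Put $\phi(\kappa)=\int\bar g(\kappa/h)\,d\mu_c$ and $\Phi(\kappa)=\frac1\kappa(\phi(\kappa)-1)$, so (\ref{eq0307-1}) claims $I=\sup_{\kappa>0}\Phi(\kappa)$.

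\emph{Weak duality and the trivial regimes.} For feasible $\xi$ and $\kappa>0$, integrating $\bar g(\kappa/h(x))\le\frac{\kappa}{h(x)}\xi(x)+g(\xi(x))$ against $\mu_c$ gives $\phi(\kappa)\le\kappa\int\xi\,dx+\int g(\xi)\,d\mu_c\le\kappa\int\xi\,dx+1$, hence $\int\xi\,dx\ge\Phi(\kappa)$, so $I\ge I^*:=\sup_{\kappa>0}\Phi(\kappa)$. If $\mu_c(\IR^d)\sup\vphi\le1$ then $\xi\equiv0$ is feasible, so $I\le0$; and $\bar g\le g(0)$ gives $\phi\le1$, so $\Phi\le0$ with $\Phi(\kappa)\to0$ as $\kappa\to\infty$, whence $I=I^*=0$ and $\xi=0$ is optimal. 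This is part~(4), and part~(1) in that regime. Using $\bar g(ca)\le c\,\bar g(a)$ for $c\ge1$ (from concavity and $\bar g(0)=0$), the integral in~(\ref{eq0316-1}) is finite for one $\kappa>0$ iff for all; if for none, then $\Phi\equiv\infty$ and weak duality forces $I=\infty$, so there is no integrable feasible density --- one implication of part~(2).

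\emph{The dual maximizer and the optimal density.} Assume~(\ref{eq0316-1}) finite. Then $\phi$ is finite, increasing, concave on $(0,\infty)$ (an integral of the concave maps $\kappa\mapsto\bar g(\kappa/h(x))$), continuous (monotone convergence both ways), with $\phi(0{+})=0$ and $\phi(\kappa)\uparrow\sup\vphi\cdot\mu_c(\IR^d)$. The perspective $\psi(\lambda):=\lambda\phi(1/\lambda)-\lambda$ (with $\lambda=1/\kappa$) is concave, with $\psi(0{+})=0$ (using $\bar g'_+(\infty)=0$, so that $\phi(\kappa)/\kappa\to0$) and $\psi(\infty)=-\infty$; hence when $\mu_c(\IR^d)\sup\vphi>1$ one has $\psi>0$ near $0$, so $\psi$ attains a strictly positive maximum at some $\kappa^*=1/\lambda^*\in(0,\infty)$ and $I^*=\Phi(\kappa^*)>0$. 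Differentiating under the integral (legitimate since $s\mapsto\bar g(s/h(x))h(x)$ is concave, hence locally absolutely continuous, so Tonelli applies) gives $\phi'_\pm(\kappa)=\int\bar g'_\pm(\kappa/h(x))\,dx$, and the first-order condition at the maximizer $\kappa^*$ of $\Phi$ reads $\int\bar g'_+(\kappa^*/h(x))\,dx\le I^*\le\int\bar g'_-(\kappa^*/h(x))\,dx$. Pick a measurable $\xi^*$ with $\bar g'_+(\kappa^*/h(x))\le\xi^*(x)\le\bar g'_-(\kappa^*/h(x))$ and $\int\xi^*\,dx=I^*$ (an intermediate-value choice, the integral being affine in the selection parameter). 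By the conjugacy identity $g(\xi^*(x))=\bar g(\kappa^*/h(x))-\frac{\kappa^*}{h(x)}\xi^*(x)$, so $\int g(\xi^*)\,d\mu_c=\phi(\kappa^*)-\kappa^*I^*=1$; thus $\xi^*$ is feasible with $\int\xi^*\,dx=I^*$, and combined with weak duality $I=I^*$. This proves~(\ref{eq0307-1}) in all remaining cases, shows $\xi^*$ is an optimal point density, and gives $I>0$ under the hypothesis of~(3); with the previous paragraph this yields parts~(1), (2) and the existence and positivity in~(3).

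\emph{All optimizers, and the main obstacle.} For an arbitrary optimal $\xi$: first $\int g(\xi)\,d\mu_c=1$, since otherwise $\xi>0$ on a set of positive Lebesgue measure (as $\xi\equiv0$ would give $g(0)\mu_c(\IR^d)>1$), and $g$ being Lipschitz on each $[\delta,\infty)$ lets one lower $\xi$ slightly there, keeping the constraint, against minimality of $I$ --- the perturbation already used in the proof of Proposition~\ref{prop4.2}. Then, for any maximizer $\kappa$ in~(\ref{eq0307-1}), $\int\xi\,dx+\frac1\kappa(\int g(\xi)\,d\mu_c-1)=I=I^*$ equals the dual value $\inf_{\xi'\ge0}\bigl(\int\xi'\,dx+\frac1\kappa(\int g(\xi')\,d\mu_c-1)\bigr)$, so $\xi$ minimizes the Lagrangian $\xi'\mapsto\int\bigl(\xi'(x)+\frac{h(x)}{\kappa}g(\xi'(x))\bigr)\,dx-\frac1\kappa$; hence a.e.\ $\xi(x)$ minimizes $s\mapsto s+\frac{h(x)}{\kappa}g(s)$, and the minimizer set $\{s:-\kappa/h(x)\in\partial g(s)\}=[\bar g'_+(\kappa/h(x)),\bar g'_-(\kappa/h(x))]$ is the stated bound. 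The routine parts are weak duality and the convex-analytic identities; the delicate step is showing the dual maximizer $\kappa^*$ lies in the open interval $(0,\infty)$, which requires the finiteness dichotomy for~(\ref{eq0316-1}), the concavity and exact boundary behaviour of $\phi$ (notably $\phi(0{+})=0$ and $\phi(\kappa)/\kappa\to0$, the latter resting on $\bar g'_+(\infty)=0$), and the differentiation-under-the-integral formula for $\phi'_\pm$ --- once $\kappa^*$ is secured, the rest is complementary slackness.
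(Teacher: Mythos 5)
Your proof is correct, and in the decisive step it takes a genuinely different route from the paper's. Both arguments share the weak-duality chain obtained by integrating $\frac{\kappa}{h}\xi \ge \bar g(\kappa/h)-g(\xi)$, the disposal of the regimes $\mu_c(\IR^d)\sup\vphi\le1$ and ``integral (\ref{eq0316-1}) infinite,'' the conjugacy characterization $g(b_0)=\bar g(a_0)-a_0b_0 \Leftrightarrow b_0\in[\bar g'_+(a_0),\bar g'_-(a_0)]$, and the final convex-combination/intermediate-value selection between $\bar g'_+(\kappa/h)$ and $\bar g'_-(\kappa/h)$. Where you diverge is in how the critical $\kappa$ is produced: the paper works on the \emph{primal} side, deriving the asymptotics $g(\bar g'_-(a))\to0$ as $a\dto0$ and $\to g(0)$ as $a\to\infty$, and then locating $\kappa_0=\sup\{\kappa:\int g(\xi^\kappa_-)\,d\mu_c\le1\}$ by a monotone crossing argument so that the constraint value $1$ is hit exactly; you instead maximize the \emph{dual} objective $\Phi(\kappa)=\frac1\kappa(\phi(\kappa)-1)$ directly, using concavity of $\phi$, the boundary behaviour $\phi(0{+})=0$ and $\phi(\kappa)/\kappa\to0$, and the first-order conditions $\phi'_+(\kappa^*)\le I^*\le\phi'_-(\kappa^*)$. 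Your route buys a transparent proof that the dual supremum is attained (in the paper this is a by-product of the tightness of the chain at $\kappa_0$) and a cleaner ``all optimizers'' discussion via the Lagrangian; the price is the identity $\phi'_\pm(\kappa)=\int\bar g'_\pm(\kappa/h)\,dx$, i.e.\ differentiation under the integral sign, which you justify only in one clause — it does hold, by monotone convergence of the one-sided difference quotients of the concave integrands, but this is exactly the technical step the paper's crossing argument is designed to avoid. Your treatment of the uniqueness of the complementary-slackness conditions for an arbitrary optimizer (first forcing $\int g(\xi)\,d\mu_c=1$ by the Lipschitz perturbation, then pointwise minimization of $s\mapsto s+\frac{h}{\kappa}g(s)$) is more detailed than the paper's one-line remark and is correct.
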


\begin{remark}\label{re0406-1}
Assume that $I\in(0,\infty)$ and that $g$ is strictly convex. A standard result from convex analysis (see  \citeasnoun{Rock72}, Theorem 26.3) states that the strict convexity is equivalent to differentiability of $\bar g$. Hence, one obtains a one parameter family of candidates as optimal point densities. Moreover,
\begin{align}\label{eq1006-1}
\bar g'(a)= \inf\{b>0:  -g'_+(b)\le a\}.
\end{align}
Additionally, the strict convexity implies almost everywhere uniqueness, since  for two optimal solutions $\xi_1,\xi_2$ that were not almost everywhere identical the combination $\bar\xi=\frac12 (\xi_1+\xi_2)$ would satisfy $\int g(\bar\xi(x))\,dx<1$ and by continuity of $g$ it is straight forward to construct an admissible  density with smaller $L^1$-norm.

If  $g$ is additionally differentiable, the one parameter family of candidates is given via
$$
\xi^\kappa(x)=(-g')^{-1}\bigl(\frac\kappa{h(x)}\bigr)
$$
with the convention that $(-g')^{-1}(\infty)=0$.
\end{remark}

\begin{proof}[ of Theorem \ref{theo0315-1}] By the concavity of $\bar g$, the integral (\ref{eq0316-1}) is either finite  or infinite for all $\kappa>0$.
We start with proving the
  ``$\ge$'' inequality in the dual formula.
Note that by definition of $\bar g$, $
a\,b\ge \bar g(a)-g(b)$ for $a,b\ge 0$.
Therefore, for $\kappa>0$ and $\xi$ satisfying (\ref{eq0307-3}), it is true that
\begin{align}\begin{split}\label{eq0308-2}
\int \xi(x)\,dx&\ge\int_{\{h>0\}} \xi(x)\,dx= \frac1\kappa \int \frac\kappa{h(x)}\,\xi(x) \,d\mu_c(x) \\
&\ge \frac1\kappa\Bigl( \int \bar g\Bigl(\frac\kappa{h(x)}\Bigr) \,d\mu_c(x)-\int g(\xi(x))\,d\mu_c(x)\Bigr)
\ge \frac1\kappa\Bigl( \int \bar g\Bigl(\frac\kappa{h(x)}\Bigr)  \,d\mu_c(x)-1\Bigr).
\end{split}\end{align}

In order to have equalities in the above estimates, we need to find a density $\xi$ and $\kappa>0$ such that
\begin{eqnarray}\label{cond1}
&\xi(x)=0 \text{ for }\lam^d \text{ a.a. }x\in\{h=0\},\\
%\end{align}
%\begin{align}
\label{cond2}
&\bar g\Bigl(\frac\kappa{h(x)}\Bigr)-g(\xi(x))=\frac\kappa{h(x)}\,\xi(x),\qquad\text{for }\mu_c\text{ a.a. }x
\end{eqnarray}
and
\begin{align}\label{cond3}
\int g(\xi(x))\,d\mu_c(x)=1.
\end{align}

In the case $\mu_c(\IR^d) \sup_{t\ge0}\vphi(t)\le 1$, it is easily seen that $\xi=0$ is an optimal point density so that $I=0$ which proves assertion 4. Moreover, the term on the right hand side of (\ref{eq0307-1}) tends to $0$ when letting $\kappa\to\infty$ so that the dual formula is valid in that case. Moreover, if~(\ref{eq0317-1}) is infinite for one $\kappa>0$, then there is no integrable nonnegative function $\xi$ satisfying~(\ref{eq0307-3}) and the dual formula is valid as well.

From now on, we assume that $\mu_c(\IR^d) \sup_{t\ge0}\vphi(t)>1$ and that~(\ref{eq0317-1}) is finite for any $\kappa>0$.
Next, we derive a density $\xi$ satisfying the three abovementioned conditions. Then estimate (\ref{eq0308-2}) implies optimality for this choice of $\xi$ which proves assertion 1.
%Since the remaining arguments are standard in the treatment of  convex optimization problems, we will only give a sketch of the remaining proof. First one needs to verify that for each  $\kappa>0$,  the point densities
%$$
%\xi^\kappa_+(x)= \bar g'_+\Bigl(\frac\kappa{h(x)}\Bigr), \qquad x\iR^d,
%$$
%and
%$$
%\xi^\kappa_-(x)= \bar g'_-\Bigl(\frac\kappa{h(x)}\Bigr), \qquad x\iR^d
%$$
%meet requirements (\ref{cond1})  and (\ref{cond2}).

First we examine the second condition.
Consider $a_0,b_0\ge0$ with
\begin{align}\label{eq0308-1}
 \bar g'_+(a_0)\le b_0 \le \bar g'_-(a_0).
\end{align}
Due to the concavity of $\bar g$ it holds that $\bar g(a)\le \bar g(a_0) +b_0 (a-a_0)$ for all $a\ge0$, and we obtain with (\ref{eq0316-2}),
$$
g(b_0)= \sup_{a\ge0}[ \bar g(a)-  b_0 \,a]\le  \sup_{a\ge 0}[ \bar g(a_0)+b_0(a-a_0) -b_0\,a]= \bar g(a_0)-b_0a_0.
$$
Consequently, condition (\ref{eq0308-1}) implies that
\begin{align}\label{eq0526-2}
g(b_0)= \bar g(a_0)-b_0\,a_0.
\end{align}
Conversely, it is easy to see that any pair $(a_0,b_0)$ of nonnegative reals satisfying~(\ref{eq0526-2}) also satisfy~(\ref{eq0308-1}).

For  $\kappa>0$, we consider the point densities
$$
\xi^\kappa_+(x)= \bar g'_+\Bigl(\frac\kappa{h(x)}\Bigr), \qquad x\iR^d,
$$
and
$$
\xi^\kappa_-(x)= \bar g'_-\Bigl(\frac\kappa{h(x)}\Bigr), \qquad x\iR^d
$$
with the convention $\bar g'_+(\infty)=\bar g'_-(\infty)=0$ so that, in particular,
   $\xi_-^\kappa(x)=\xi_+^\kappa(x)=0$ for $x\in \{h=0\}$.
Furthermore, for any $x\in\{h>0\}$, condition (\ref{eq0308-1}) is satisfied for $a_0=\kappa/h(x)$ and for all $b_0\in [\xi_+^\kappa(x), \xi_-^\kappa(x)]$. Therefore, every convex combination $\bar\xi=\alpha\,\xi_+^\kappa+(1-\alpha)\, \xi_-^\kappa$ satisfies
\begin{align} \label{eq0308-4}
\bar g\Bigl(\frac\kappa{h(x)}\Bigr)-g(\bar\xi(x))=\frac\kappa{h(x)}\,\bar\xi(x) \ \text{ on } \ \{h>0\}.
\end{align}
In analogy to (\ref{eq0308-2}) we obtain that
$$
\int \bar \xi(x)\,dx= \frac1\kappa \Bigl( \int \bar g\Bigl(\frac\kappa{h(x)}\Bigr) \,d\mu_c(x)-\int g(\bar \xi(x))\,d\mu_c(x)\Bigr).
$$
In particular, all three integrals are finite. It remains to find an appropriate $\kappa>0$ and a  convex combination $\bar\xi$ as above with
$$
\int g(\bar \xi(x))\, d\mu_c(x)=1.
$$

We need to compute the asymptotic behavior of $g(\bar g'_-(a))$ as $a\to0$ and $a\to\infty$.
Due to equation (\ref{eq0526-2}), one has for all $a>0$,
$$
g(\bar g'_-(a))=\bar g(a)-\bar g'_-(a)\, a\le \bar g(a),
$$
and we obtain that $\lim_{a\dto0} g(\bar g'_-(a))=0$.
On the other hand, for any $b>0$,
$$
g(b)=\sup_{a\ge0}[\bar g(a)-a\,b] \ge \limsup_{a\to\infty} a\,\Bigl(\frac{\bar g(a)}{a}-b\Bigr),
$$
and, since $g(b)$ is finite, it follows that $\lim_{a\to\infty} \bar g(a)/a=0$. Thus the concavity of $\bar g$ implies that $\lim_{a\to\infty} \bar g'_-(a)=0$ and we arrive at
$$
\lim_{a\to\infty} g(\bar g_-'(a))=g(0).
$$

The above asymptotics imply with monotone convergence, that
$$
\lim_{\kappa\to0} \int g(\xi^\kappa_-(x))\, d\mu_c(x)= 0,
$$
and
$$
\lim_{\kappa\to\infty} \int g(\xi^\kappa_-(x))\, d\mu_c(x)= g(0)\,\mu_c(\IR^d)>1.
$$
Now let
$$
\kappa_0=\sup\Bigl\{ \kappa>0: \int g(\xi_-^\kappa(x))\,d\mu_c(x)\le 1\Bigr\}.
$$
As we have seen above the set is non-empty and bounded so that $\kappa_0\in(0,\infty)$. Moreover, since for any $x\iR^d$,  $\kappa\mapsto \xi^\kappa_-(x)$ is decreasing and left continuous, it follows by monotone convergence that
\begin{align}\label{eq0308-6}
\int g(\xi_-^{\kappa_0}(x))\,d\mu_c(x)= \lim_{\kappa\uparrow\kappa_0} \int g(\xi_-^{\kappa}(x))\,d\mu_c(x)\le 1.
\end{align}
Similarly the inequality  $\xi_+^\kappa (x)\le \xi_-^\kappa(x)$ implies that
\begin{align}\begin{split}\label{eq0308-5}
\int g(\xi_+^{\kappa_0}(x))\,d\mu_c(x)&=\lim_{\kappa\dto\kappa_0} \int g(\xi_+^{\kappa}(x))\,d\mu_c(x)\\
&\ge \lim_{\kappa\dto\kappa_0}\int  g(\xi_-^{\kappa}(x))\,d\mu_c(x)\ge1.
\end{split}
\end{align}
Hence, inequalities (\ref{eq0308-6}) and (\ref{eq0308-5}) imply the existence of a convex combination $\bar\xi=\alpha\, \xi_+^\kappa+(1-\alpha)\, \xi_-^\kappa$ with
$$
\int g(\bar \xi(x))\,d\mu_c(x)=1.
$$
This function $\bar \xi$ solves
$$
\int \bar\xi(x)\,dx= \frac1{\kappa_0}\Bigl( \int \bar g\Bigl(\frac{\kappa_0}{h(x)}\Bigr)  \,d\mu_c(x)-1\Bigr)=\sup_{\kappa>0}\frac1{\kappa}\Bigl( \int \bar g\Bigl(\frac{\kappa}{h(x)}\Bigr)  \,d\mu_c(x)-1\Bigr),
$$
and we proved assertion 1. Moreover, we observe that for any $\xi$ which is not of the form~(\ref{eq0526-3}) there is a strict inequality in at least one of the estimates in (\ref{eq0308-2}).

\end{proof}

\section{The singular case}
In this section, we consider an original $X$ with law $\mu\bot\lam^d$ or, equivalently,  $\mu_c=0$. Moreover, we again assume that $\mu$ is compactly supported.

\begin{propo}\label{le0503-1}
There exist codebooks $\cC(N)$, $N\geq 1$,  with $\lim_{N\to\infty} \frac1N |\cC(N)|=0$ such that
$$
\lim_{N\to\infty} \IE \,\varphi(N^{1/d}\, d(X,\cC(N)))
=0.
$$
\end{propo}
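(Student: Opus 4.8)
The plan is to exploit the fact that a singular measure is supported, up to arbitrarily small mass, on a set of arbitrarily small Lebesgue measure, and to cover that set by $o(N)$ cubes of the correct scale while placing a single codepoint in each. Concretely: fix $l>0$ with $\mu(C)=1$ for $C=[-l,l)^d$. Since $\mu\perp\lam^d$, for every $\delta>0$ there is an open set $G_\delta\subset C$ (or a finite union of dyadic subcubes of $C$) with $\mu(G_\delta)\ge 1-\delta$ and $\lam^d(G_\delta)\le\delta$. The remaining mass $\delta$ living outside $G_\delta$ will contribute a term that we control crudely, using that $\varphi$ applied to a bounded argument (the diameter of $C$ times $N^{1/d}$ — no, this is unbounded) — so one has to be a little careful here: for the $\delta$-mass we do \emph{not} get a pointwise bound on $\varphi(N^{1/d}d(X,\cC(N)))$. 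The fix is to additionally put, on all of $C$, a coarse grid $(\kappa N^{-1/d}\IZ^d)\cap C$ of cardinality $\sim(2l/\kappa)^d N = o(N)$ as $\kappa\to\infty$; this guarantees $d(x,\cC(N))\le c\kappa N^{-1/d}$ for \emph{all} $x\in C$, so $\varphi(N^{1/d}d(X,\cC(N)))\le\varphi(c\kappa)$ uniformly, and the $\delta$-mass contributes at most $\delta\,\varphi(c\kappa)$.

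For the main mass inside $G_\delta$: write $G_\delta$ as a finite disjoint union of dyadic cubes of sidelength $l/2^m$ (refining $m$ if necessary so that the union is exact), say $G_\delta=\bigcup_{j=1}^{K} Q_j$ with $K\,(l/2^m)^d=\lam^d(G_\delta)\le\delta$. Into each $Q_j$ we place a single codepoint, its center, so that $d(x,\cC(N))\le c\,l\,2^{-m}$ for all $x\in G_\delta$. The contribution of the $G_\delta$-mass is then at most $\varphi(N^{1/d}\,c\,l\,2^{-m})$ — but $N^{1/d}2^{-m}$ is \emph{not} small for fixed $m$. So the single-center idea must instead be applied at a scale that shrinks with $N$: we cover $G_\delta$ by cubes of sidelength $\eps N^{-1/d}$; the number of such cubes meeting $G_\delta$ is $\lesssim \lam^d(G_\delta)\,(\eps N^{-1/d})^{-d}=\delta\eps^{-d} N$, which is $o(N)$ once $\delta\eps^{-d}\to0$. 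Placing one center in each gives $d(x,\cC(N))\le c\eps N^{-1/d}$ on $G_\delta$, hence the $G_\delta$-contribution is at most $\varphi(c\eps)$. Putting the three pieces together,
$$
\IE\,\varphi\bigl(N^{1/d}d(X,\cC(N))\bigr)\le \varphi(c\eps)+\delta\,\varphi(c\kappa),\qquad
\tfrac1N|\cC(N)|\lesssim \delta\eps^{-d}+(2l/\kappa)^d .
$$

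It remains to let the parameters go to their limits in the right order: first choose $\kappa$ large so that $(2l/\kappa)^d$ is small and $\eps$ small so that $\varphi(c\eps)$ is small (using $\lim_{t\dto0}\varphi(t)=0$), then choose $\delta$ small relative to both $\eps$ (so that $\delta\eps^{-d}$ is small) and $\kappa$ (so that $\delta\varphi(c\kappa)$ is small). A diagonalization over a sequence $\eps_k\dto0$ then produces codebooks $\cC(N)$ with $\frac1N|\cC(N)|\to0$ and $\IE\,\varphi(N^{1/d}d(X,\cC(N)))\to0$, as claimed. The only genuinely delicate point is the order-of-limits bookkeeping — in particular one must fix $\eps$ and $\kappa$ \emph{before} shrinking $\delta$, since $\varphi(c\kappa)$ can be huge (indeed $\varphi$ need not be bounded), and the number of covering cubes $\delta\eps^{-d}N$ forces $\delta\ll\eps^d$; everything else is the same covering-and-scaling bookkeeping already used in Proposition \ref{le030305a}.
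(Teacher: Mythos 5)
Your proposal is essentially the paper's proof: the paper likewise approximates the singular mass by a finite union of dyadic cubes $C$ of small total Lebesgue measure, puts a fine grid $(\kappa_1 N^{-1/d}\IZ^d)\cap C$ there and a coarse grid $(\kappa_2 N^{-1/d}\IZ^d)\cap[-l,l)^d$ everywhere, obtains $\IE\,\vphi\le\vphi(c\kappa_1)+\eps\,\vphi(c\kappa_2)$ and $\tfrac1N|\cC(N)|\lesssim\eps\kappa_1^{-d}+(2l)^d\kappa_2^{-d}$, and then takes the limits in exactly the order you describe (fix the grid scales, then shrink the exceptional mass). Your $\eps,\kappa,\delta$ correspond to the paper's $\kappa_1,\kappa_2,\eps$, and you correctly flag the one genuinely delicate point, the order of limits; the argument is sound provided (as you note parenthetically) you take $G_\delta$ to be a finite union of dyadic cubes so that the grid-cube count $\lesssim\lam^d(G_\delta)(\eps N^{-1/d})^{-d}$ is valid.
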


\begin{proof} We fix $\eps>0$.
As the measure $\mu$ is singular with respect to the Lebesgue measure
$\lambda^d$, there exists an open set $A\subset
\IR^d$ with $\mu(A)=1$ and $\lambda^d(A)\leq \varepsilon$.
Due to Lemma 1.4.2 in \citeasnoun{Cohn80}, one can represent the open set $A$ as a
countable disjoint union of half-open cubes $(C_i)_{i\iN}$ in $\bigcup_{m=1}^\infty \cB_m$, where
$$
\cB_m=\bigl\{ [i_1 2^{-m},(i_1+1) 2^{-m})\times\dots\times [i_d 2^{-m},(i_d+1) 2^{-m}): i_1,\dots,i_d\iZ \bigr\}  \subset\IR^d
$$
for $m\iN$. Due to monotone convergence, we obtain that there exists $M\iN$ with
\begin{align}\label{eq0323-3}
\mu\Bigl(\bigcup_{i=1}^M C_i\Bigr)\ge \mu(A)-\eps = 1-\eps.
\end{align}
Set $C=\bigcup_{i=1}^M C_i$.
%Let $m\iN$ denote the largest index for which there is a cube $C_i$, $i\in\{1,\dots,M\}$, with $C_i\in\cB_m$. Certainly we can decompose each cube $C_i$ in a disjoint union of finite sets of $\cB_m$.
%Hence, there exists a finite union of disjoint  sets in $\cB_m$ satisfying (\ref{eq0323-3}).
%Let us suppose that $\bigcup_{i=1}^M C_i$ is exactly such a disjoint union and set $C=\bigcup_{i=1}^M C_i$.
%
%We can assume without loss of generality that all cubes $C_i$, $i=1,\dots,M$, are elements of one particular family $\cB_m$ (If this is not the case, we can decompose the cubes appropriately and will finally obtain cubes in one family $\cB_m$ satisfying ??)

Let us introduce the codebooks; fixing $l>0$ such that $\supp(\mu)\subset [-l,l)^d$, the construction depends upon two parameters $\kappa_1,\kappa_2>0$:
$$
\cC(N)= \bigl((\kappa_1 \,N^{-1/d}\, \IZ^d)\cap C\bigr) \cup \bigl((\kappa_2 \,N^{-1/d}\, \IZ^d)\cap [-l,l)^d\bigr), \qquad N\ge1.
$$
We need to control the size of $\cC(N)$. For $i\in\{1,\dots,M\}$, let $m_i\iN$ denote the unique number with $C_i\in\cB_{m_i}$, and observe that
$$
|(\kappa_1 \,N^{-1/d}\, \IZ^d)\cap C_i| \le \Bigl(\frac{2^{-m_i}}{\kappa_1\,N^{-1/d}} +1\Bigr)^d \sim \lam^d(C_i) \,\kappa_1^{-d} \,N
$$
as $N\to\infty$.
Analogously,
$$
|(\kappa_2 \,N^{-1/d}\, \IZ^d)\cap [-l,l)^d| \le \Bigl(\frac{2l }{\kappa_2\,N^{-1/d}} +1\Bigr)^d \sim (2l)^d \,\kappa_2^{-d} \,N.
$$
Consequently,
\begin{align*}
|\cC(N)|&\le \sum_{i=1}^M |(\kappa_1 \,N^{-1/d}\, \IZ^d)\cap C_i|+ |(\kappa_2 \,N^{-1/d}\, \IZ^d)\cap [-l,l)^d|\\
&\lesssim \bigl(\lam^d(C) \,\kappa_1^{-d}+(2l)^d \,\kappa_2^{-d}\bigr)\,N
\le \bigl(\eps \,\kappa_1^{-d}+(2l)^d \,\kappa_2^{-d}\bigr)\,N.
\end{align*}

Next, we estimate the approximation error. Suppose that $N\ge 1$ is
sufficiently large so that  $C_i\cap \cC(N)\not =\emptyset$ for all $i=1,\dots,M$. Let $c=\sup_{x\in[0,1)^d}\|x\|$, and observe that for all $x\in C$, $
d(x,\cC(N))\le c\,\kappa_1 \,N^{-1/d}$.
Moreover, for any $x\in[-l,l)^d$,
$
d(x,\cC(N))\le c\,\kappa_2\,N^{-1/d}$.
Consequently,
\begin{align}\begin{split}\label{eq0323-4}
\IE \,\varphi( N^{1/d}\, d(X, \cC(N))) &\le \mu(C) \,\vphi(c\,\kappa_1)+(1-\mu(C)) \,\vphi(c\,\kappa_2)\\
& \le \vphi(c\,\kappa_1)+\eps \,\vphi(c\,\kappa_2).
\end{split}
\end{align}

Now, for $\delta>0$ arbitrary, pick $\kappa_1,\kappa_2>0$ satisfying
$
\vphi(c\,\kappa_1)\le \delta/2$  and $ (2l)^d/\kappa_2^d\le \delta/2$,
and choose $\eps>0$ so that
$
\eps \kappa_1^{-d}\le \delta/2 \ \text{ and } \ \eps\,\vphi(c\,\kappa_2)\le \delta/2$.
Then the corresponding codebooks $\cC(N)$ satisfy
$$
|\cC(N)|\lesssim \delta \,N \ \text{ and } \ \IE \,\varphi( N^{1/d}\, d(X, \cC(N))) \lesssim \delta,
$$
and the assertion of the proposition follows by a diagonalization argument.
\end{proof}

\section{Extension to the non-compact setting}\label{sec6}

In order to treat the non-compact quantization problem, we need to control the impact of realizations lying outside large cubes. For $L^p(\IP)$-norm distortions, \citeasnoun{Pie70} (see also \citeasnoun{GraLu00},  Lemma 6.6) discovered that the quantization error can be estimated against a higher moment $\tilde p>p$ of $\|X\|$.  His result can be easily extended to the inequality
$$
\delta(N|X,p)\le C\, \IE[ \|X\|^{\tilde p}]^{1/\tilde p} N^{-1/d},
$$
where $X$ is an arbitrary original in $\IR^d$, $N\iN$ and $C$ is a universal constant depending only on $E$, $p$ and $\tilde p$.
Pierce's proof is based on a random coding argument. In contrast to
his approach, we will use $\eps$-nets to establish a similar result.

The construction is based on several parameters. Let $\Psi:[0,\infty)\to [0,\infty)$ denote an increasing function, $(r_n)_{n\iN_0}$ an increasing sequence, and let $(\alpha_n)_{n\iN}$ be a positive decreasing and summable sequence.

\begin{lemma}\label{le0324-1}
Let $J\iN_0$ and denote by $X$ a $\left(B(0,r_{J})^c\cup\{0\}\right)$-valued r.v. For $N\geq0$ there exists a codebook $\cC(N)$ of size
$1+N \,\sum_{n=J}^\infty \alpha_n$ satisfying
\begin{align}\label{eq0309-1}
\IE \,\vphi\bigl(N^{1/d} \,d(X,\cC(N))\bigr)\le \IE[\Psi(\|X\|)] \sum_{n=J}^{\infty} \frac1{\Psi(r_n)} \,\vphi(c_E \, \alpha_{n}^{-1/d} \,r_{n+1}),
\end{align}
where $c_E$ is a finite constant depending on the norm $\|\cdot\|$ only.
\end{lemma}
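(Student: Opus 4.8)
The plan is to build $\cC(N)$ as a nested union of $\eps$-nets of spheres (annuli) at the scales $r_n$, $n\geq J$, spending roughly $N\alpha_n$ points on the net covering the annulus $B(0,r_{n+1})\setminus B(0,r_n)$, plus one extra point for $0$. More precisely, for each $n\geq J$ I would cover the shell $\{x:\|x\|\leq r_{n+1}\}$ (or the annulus $\{r_n\leq\|x\|<r_{n+1}\}$) by balls of radius $c_E\,\alpha_n^{-1/d}\,r_{n+1}\cdot N^{-1/d}$; a standard volume/packing argument in the Banach space $E$ shows that $\lfloor N\alpha_n\rfloor$ such balls suffice, the constant $c_E$ absorbing the ratio of volumes of balls of comparable radius in $(\IR^d,\|\cdot\|)$. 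Taking $\cC(N)$ to be $\{0\}$ together with the union over $n\geq J$ of the centres of these nets yields $|\cC(N)|\leq 1+N\sum_{n=J}^\infty\alpha_n$, as claimed.

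Next I would estimate the distortion. Partition $B(0,r_J)^c$ into the annuli $A_n=\{x: r_n\leq\|x\|<r_{n+1}\}$ for $n\geq J$. On the event $\{X\in A_n\}$, by construction $d(X,\cC(N))\leq c_E\,\alpha_n^{-1/d}\,r_{n+1}\,N^{-1/d}$, hence $\vphi(N^{1/d}d(X,\cC(N)))\leq \vphi(c_E\,\alpha_n^{-1/d}\,r_{n+1})$ using monotonicity of $\vphi$; on $\{X=0\}$ the distortion is $0$. Therefore
\begin{align*}
\IE\,\vphi\bigl(N^{1/d}d(X,\cC(N))\bigr)\leq \sum_{n=J}^\infty \IP(X\in A_n)\,\vphi(c_E\,\alpha_n^{-1/d}\,r_{n+1}).
\end{align*}
The final step is to bound $\IP(X\in A_n)$ by a moment: since $\|X\|\geq r_n$ on $A_n$ and $\Psi$ is increasing, $\IP(X\in A_n)\leq \IP(\Psi(\|X\|)\geq \Psi(r_n))\leq \IE[\Psi(\|X\|)]/\Psi(r_n)$ by Markov's inequality. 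Substituting gives exactly (\ref{eq0309-1}).

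The routine points are the volumetric covering estimate (which is where $c_E$ comes from and why the covering radius of an annulus of outer radius $r_{n+1}$ with $\sim N\alpha_n$ points scales like $\alpha_n^{-1/d}r_{n+1}N^{-1/d}$) and the bookkeeping of the floor function in $\lfloor N\alpha_n\rfloor$; these are standard and I would not belabour them. The only genuine subtlety is making sure the construction is uniform in $J$ so that the constant $c_E$ depends on $E$ alone and not on $J$ or the sequences, and that $\cC(N)$ is a single codebook simultaneously handling all shells — this is automatic once the nets are defined shell-by-shell with the shell-dependent radius, since their union does the job. There is no serious obstacle; the lemma is essentially a careful packing argument combined with a Markov-type tail bound, mimicking Pierce's estimate but with $\eps$-nets in place of the random coding.
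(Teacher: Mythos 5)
Your proposal is correct and follows essentially the same route as the paper: shell-by-shell $\eps$-nets of $B(0,r_{n+1})$ with $\lfloor N\alpha_n\rfloor$ points each (plus the point $0$, which also covers the shells where $N\alpha_n<1$), the monotonicity of $\vphi$ on each annulus, and Markov's inequality applied to $\Psi(\|X\|)$ to bound the annulus probabilities. No gaps.
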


\begin{proof}
First, observe that the  sum in estimate (\ref{eq0309-1}) diverges whenever $\liminf_{n\to\infty} r_n<\infty$. Thus we can assume without loss of generality that $\lim_{n\to\infty} r_n=\infty$.
Fix $J\iN_0$ and $N\iN$.
For $n\iN_0$, let $V_n=B(0,r_n)$ and $N_n=\alpha_n N$.
Moreover, we denote by $I\iN_0$ the smallest index $n$ with $N_n<1$, and let for $n\iN_0$ with $J\le n<I$, $\cC_n$ denote an optimal $\eps$-net for $B(0,r_{n+1})$ consisting of $N_n$ elements. As is well known there exists a constant $c_E$ only depending on the norm $\|\cdot\|$ such that $d(x,\cC_n)\leq c_E r_{n+1} N_n^{-1/d}$  for all $x\in B(0,r_{n+1})$.
Thus the codebook $\cC=\{0\}\cup\bigcup_{n=J}^{I-1}\cC_n $ contains at most $1+N\sum_{n=J}^\infty\alpha_n$ elements, and we have
\begin{align*}
\IE\,\vphi\bigl(N^{1/d} \,d(X,\cC) \bigr)& = \sum_{n=J}^\infty \IE\Bigl[1_{V_{n+1}\backslash V_{n}}(X)\,\vphi\bigl(N^{1/d} \,d(X,\cC)\bigr)\Bigr]\\
&\le\sum_{n=J}^{\infty}  \IP(X\not \in V_{n}) \,\vphi\bigl(c_E\, r_{n+1}  \, N^{1/d}/(1\vee N_{n})^{1/d}\bigr)\\
&\le \sum_{n=J}^{\infty}  \IP(X\not \in V_{n}) \,\vphi(c_E \,r_{n+1} \,\alpha_{n}^{-1/d} )\\
&\le \IE[\Psi(\|X\|)] \,\sum_{n=J}^{\infty} \frac1{\Psi(r_n)} \,\vphi(c_E \,r_{n+1}\, \alpha_{n}^{-1/d}).
\end{align*}
\end{proof}

\begin{defi}\label{defi0322-1}
We say that an increasing function $\Psi:[0,\infty)\to[0,\infty)$ satisfies the growth condition (G) for $\vphi$ iff there exists a decreasing summable  sequence $(\alpha_n)$ and an increasing sequence $(r_n)$ with
$$
\sum_{n=0}^\infty\frac1{\Psi(r_n)} \,\vphi(\alpha_{n}^{-1/d} \,r_{n+1})<\infty.
$$
\end{defi}

Suppose that $\Psi$ satisfies condition (G) for $\vphi$.
We shall see that the condition that $\IE \Psi(\|X\|)<\infty$ is
sufficient to conclude that the quantization problem for $X$ under the
Orlicz norm induced by $\vphi$ is of order $N^{-1/d}$. Moreover, quantization of
non-compact measures then  can  be approximated by the compact setting.

\begin{example}\label{exa0817-1}\begin{itemize}\item
Suppose that $\vphi(t)\leq c (1+t^p)$ for all $t\geq0$, where $c,p\iR_+$ are appropriate constants. Then, for any $\beta>\frac{p+d}d$, 
$$
\Psi(t)=t^p (\log_+ t)^\beta
$$
satisfies (G) for $\vphi$. For instance, one may choose $(r_n)_{n\iN_0}=(2^n)_{n\iN_0}$ and $(\alpha_n)_{n\iN_0}=( (n+2)^{-\gamma})_{n\iN_0}$ for a $\gamma\in(1,\frac dp(\beta -1))$.
\item
Suppose that $\vphi$ satisfies $\vphi(t)\le c\,\exp\{t^\kappa\}$ for all $t\ge0$, where $c,\kappa\iR_+$ are appropriate constants.
Then, for any $\tilde \kappa>\kappa$, the function
$$
\Psi(t)= \exp\{ t^{\tilde\kappa}\}
$$
satisfies (G) for $\vphi$, as can be verified easily for  $(\alpha_n)_{n\iN_0}=( (n+2)^{-2})_{n\iN_0}$ and $(r_n)_{n\iN_0}=((n+1)^s)_{n\iN_0}$ for $s>0$ with $s \tilde\kappa > (\frac2d+s)\kappa$.
\end{itemize}
\end{example}

\begin{remark}
The proof of the upper bound in Theorem \ref{theo0323-2} relies on the assumption that $\IE\Psi(\|X\|)<\infty$ for some $\Psi$ satisfying the growth condition (G). As we shall see below, this assumption can be replaced by the equivalent condition that  $X\in L^\Psi(\IP)$ for some $\Psi$ satisfying (G).
First, assume that $\IE \Psi(\|X\|)<\infty$ for some $\Psi$ satisfying (G). Then $\tilde\Psi=1_{[1,\infty)}\, \Psi$ satisfies (G), and since by monotone convergence
$$
\lim_{\kappa\to\infty} \IE \Psi(\|X\|/\kappa)=0,
$$
it follows that $X\in L^{\tilde\Psi}(\IP)$.
On the other hand, assuming that $X\in L^{\Psi}(\IP)$ for some $\Psi$
satisfying (G) implies the existence of a $\kappa>0$ for which
$$
\IE \Psi(\|X\|/\kappa)<\infty.
$$
Now, let $\tilde \Psi(t)=\Psi(t/\kappa)$, and denote by $(\alpha_n)$ and $(r_n)$ sequences as in Definition \ref{defi0322-1}. Then $\IE \tilde\Psi(\|X\|)<\infty$, and
$$
\sum_{n=0}^\infty\frac1{\tilde\Psi(\tilde r_n)} \,\vphi(\tilde \alpha_{n}^{-1/d} \,\tilde r_{n+1})=\sum_{n=0}^\infty\frac1{\Psi(r_n)} \,\vphi(\alpha_{n}^{-1/d} \, r_{n+1})<\infty
$$
for $\tilde \alpha_n=\kappa^d \alpha_n$ and $\tilde r_n=\kappa r_n$, $n\iN_0$.
\end{remark}

We now combine the quantization results for continuous, singular, and unbounded measures to finish the proof of the upper bound in Theorem \ref{theo0323-2}.

\begin{proof}[ of Theorem \ref{theo0323-2}]
Let $\Psi$ be a function satisfying (G). It is easy to see that there exist also a summable and decreasing sequence $(\alpha_n)$ and an increasing sequence $(r_n)$ such that
$$
\sum_{n=0}^\infty\frac1{\Psi(r_n)} \,\vphi(c_E\,\alpha_{n}^{-1/d} \,r_{n+1})<\infty,
$$
where $c_E$ is as in Lemma \ref{le0324-1}.
We denote by $\xi$ an optimal point density, so that $\xi$ satisfies
$$
\int_{\IR^d} g(\xi(x))\,d\mu_c(x)=1 \ \text{ and } \ \int_{\IR^d} \xi(x)\,dx=I
$$
or $\xi=0$ (in the case $I=0$).
We fix $\eps>0$ and let $\tilde \xi(x)=\xi(x)+\eps h(x)$, $x\iR^d$. This point density satisfies
$$
\int_{\IR^d} g(\tilde\xi(x))\,d\mu_c(x)<1
$$
since $g$ is strictly  decreasing on $\{\eta>0: g(\eta)>0\}$. Now fix $J\iN_0$ such that
\begin{align}\label{eq0503-1}
\int_{\IR^d} g(\tilde\xi(x))\,d\mu_c(x)+ \IE[\Psi(\|X\|)]\sum_{n=J}^\infty \frac1{\Psi(r_n)} \vphi(c_E\,\alpha_n^{-1/d} r_{n+1})<1
\end{align}
and $\sum_{n=J}^\infty \alpha_n <\eps$.
Next, decompose the measure $\mu$ into the sum $\mu=\tilde\mu_c+\tilde\mu_s+\mu_u$, where $\tilde\mu_c$ and $\tilde\mu_s$ are the absolutely continuous and singular part of $\mu$ restricted to $B(0,r_J)$, respectively, and $\mu_u$ contains the rest of the mass of $\mu$.

It remains to combine the former results.
Due to Proposition \ref{le030305a} there exist codebooks $\cC_1(N)$, $N\ge1$, with $\lim_{N\to\infty} \frac1N |\cC_1(N)|=\|\tilde \xi\|_{L^1(\IR^d)}$ and
$$
\limsup_{N\to\infty} \int \vphi(N^{1/d} d(x,\cC_1(N)))\,d\tilde\mu_c\le \int_{\IR^d} g(\tilde\xi(x))\,d\mu_c(x).
$$
Moreover, Proposition \ref{le0503-1} implies the existence of codebooks $\cC_2(N)$, $N\ge1$, with $\lim_{N\to \infty} \frac1N |\cC_2(N)|=0$ and
$$
\lim_{N\to\infty} \int \vphi(N^{1/d} d(x,\cC_2(N)))\,d\tilde\mu_s=0.
$$
Finally, Lemma \ref{le0324-1} (applied to $\tilde
X=1_{B(0,r_J)^c}(X)\cdot X$) yields the existence of codebooks
$\cC_3(N)$, $N\ge1$, for which
$$
\limsup_{N\to\infty} \frac1N |\cC_3(N)| \le \sum_{n=J}^\infty \alpha_n < \eps
$$
and
$$
\limsup_{N\to\infty} \int \vphi(N^{1/d} d(x,\cC_3(N)))\,d\mu_u\le \IE[\Psi(\|X\|)]\sum_{n=J}^\infty \frac1{\Psi(r_n)} \vphi(C\,\alpha_n^{-1/d} r_{n+1}).
$$
Now consider the codebooks $\cC(N)=\cC_1(N)\cup\cC_2(N)\cup\cC_3(N)$.
Due to the above estimates and (\ref{eq0503-1}), one has
\begin{align*}
\limsup_{N\to\infty} \int &\vphi(N^{1/d} d(x,\cC(N)))\,d\mu \\
&\le \int_{\IR^d} g(\tilde\xi(x))\,d\mu_c(x)+ \IE[\Psi(\|X\|)]\sum_{n=J}^\infty \frac1{\Psi(r_n)} \vphi(C\,\alpha_n^{-1/d} r_{n+1})<1,
\end{align*}
so that for sufficiently large $N$ it is true that
$
\|d(X,\cC(N))\|_{\vphi}\le N^{-1/d}$.
On the other hand,
$$
\limsup_{N\to\infty} \frac1N |\cC(N)| <(1+\eps)I+\eps
$$
and,  for sufficiently large $N$, it holds that $|\cC(N)|\le (I+\eps
I+\eps) N$. Consequently, it follows that for large $N$
$$
\delta((I+\eps I+\eps) N|X,\vphi)\le N^{-1/d}.
$$
Switching from $N$ to $M=(I+\eps I+\eps) N$ one obtains
$$
\delta(M|X,\vphi)\le (I+\eps I+\eps)^{1/d} \, M^{-1/d},
$$
for $M$ large. Since $\eps>0$ was arbitrary, it follows that
$$
\limsup_{M\to\infty} M^{1/d}\,\delta(M|X,\vphi)\le I^{1/d}
$$
and we proved the upper inequality.

In order to prove the lower bound we fix codebooks $\cC(N)$, $N\iN$, with at most $N$ elements and
$$
\limsup_{N\to\infty} N^{1/d}\, \delta(N|X,\vphi)\le I^{1/d}.
$$
By Proposition~\ref{prop4.2}, each accumulation point of the associated empirical measures
$$
\nu^N=\frac1N \sum_{\hat x\in\cC(N)} \delta_{\hat x}, \qquad N\iN,
$$
lies in $\cM$. In particular, \begin{align}\label{eq_size}
\lim_{N\to\infty} \frac{|\cC(N)|}{N}=1.
\end{align} Therefore, for any $\eps\in(0,1)$,
$$
\liminf_{N\to\infty} N^{1/d} \delta((1-\eps)N|X,\vphi)>I^{1/d}.
$$
Otherwise one could construct a sequence of codebooks $\cC(N)$, $N\iN$, as above which does not fulfil~(\ref{eq_size}). Switching from $N$ to $M=(1-\eps)N$ and letting $\eps\dto 0$ we obtain the lower bound.

The remaining properties of the minimizer $I$ were proved in Theorem~\ref{theo0315-1}.
\end{proof}

%\bibliography{../../bib/biblio}
\bibliographystyle{apsr}

\end{document}